\newenvironment{subproof}[1][\proofname]{%
  \begin{proof}[#1]%
}{%
  \end{proof}%
}
\DeclareMathOperator{\CAT}{CAT}
\DeclareMathOperator{\Rips}{Rips}
\title[Hyperbolic Relative to Strongly Shortcut]{Relatively hyperbolic
  groups with strongly shortcut parabolics are strongly shortcut}
\author{Nima Hoda}
\address{École normale supérieure, Université
  PSL, CNRS, Paris, France \\ \ \\
  Instytut Matematyczny,
  Uniwersytet Wroc\l awski\\
  pl.\ Grun\-wal\-dzki 2/4,
  50--384 Wroc{\l}aw, Poland \\ \ \\
  Cornell University}
\email{nima.hoda@mail.mcgill.ca}
\author{Suraj Krishna M S}
\address{Faculty of Mathematics, Technion -- Israel Institute of Technology \\
  Haifa 32000, Israel}
\email{surajms@campus.technion.ac.il}
\date{\today}
\keywords{relatively hyperbolic group, %
  strongly shortcut group, %
  strongly shortcut space}
\subjclass[2010]{20F65, 
  20F67} 
\begin{document}

\begin{abstract}
We show that a group that is hyperbolic relative to strongly shortcut groups is itself strongly shortcut, thus obtaining new examples of strongly shortcut groups. The proof relies on a result of independent interest: we show that every relatively hyperbolic group acts properly and cocompactly on a graph in which the parabolic subgroups act properly and cocompactly on convex subgraphs.
\end{abstract}

\maketitle

\tableofcontents

\section{Introduction}

Strongly shortcut graphs and groups were introduced by the first named
author \cite{Hoda:2018b} who later generalized the
strong shortcut property to rough geodesic metric spaces
\cite{Hoda:shortcut_space}.  The strong shortcut property is a very
general form of nonpositive curvature
{condition} satisfied by many spaces
of interest in geometric group theory, metric
graph theory and geometric topology.  These include Gromov-hyperbolic spaces \cite{Hoda:2018b}, asymptotically $\CAT(0)$ spaces \cite{Hoda:shortcut_space}, hierarchically hyperbolic spaces, coarse Helly metric spaces of uniformly bounded geometry \cite{hhs_strongly_shortcut}, $1$-skeletons of finite dimensional $\CAT(0)$ cube complexes (i.e. median graphs), $1$-skeletons of quadric complexes (i.e. hereditary modular graphs), $1$-skeletons of systolic complexes (i.e. bridged graphs), standard Cayley graphs of Coxeter groups \cite{Hoda:2018b} and all of the Thurston geometries except Sol \cite{Hoda:heisenberg, Kar:2011}.  Despite this surprisingly unifying nature, there are nonetheless important consequences for groups that act metrically properly and coboundedly on strongly shortcut geodesic metric spaces: finite presentability, polynomial isoperimetric function and thus decidable word problem \cite{Hoda:2018b, Hoda:shortcut_space}.

The strong shortcut property is essentially about limitations on the scale and precision at which subspaces can approximate circles.  Specifically:
\begin{defn}[Strongly shortcut]
A graph $\Gamma$ is \emph{strongly shortcut} if, for some $K > 1$ there is a bound on the lengths of the $K$-bilipschitz combinatorial cycles in $\Gamma$.  A group $G$ is \emph{strongly shortcut} if $G$ acts properly and cocompactly on a strongly shortcut graph.\end{defn}

This turns out to be equivalent to the existence of a metrically proper and cobounded $G$-action on a strongly shortcut 
geodesic metric space, which we define in \cref{sec:cayley_with_ss_parabolics}.  Thus the following classes of groups are all strongly shortcut: hyperbolic groups \cite{Gromov:1987}, asymptotically $\CAT(0)$ groups \cite{Kar:2011} (e.g. $\CAT(0)$ groups \cite{Bridson:1999}), hierarchically hyperbolic groups \cite{hhs1,hhs2} (e.g. mapping class groups of surfaces \cite{gcs1,gcs2}), coarse Helly groups \cite{chalopin2020helly} (e.g. Artin groups of FC-type, weak Garside groups \cite{huang2019helly}), the discrete Heisenberg group \cite{Hoda:heisenberg},  systolic groups (e.g. finitely presented $C(6)$ small cancellation groups \cite{Wise:2003}) and quadric groups (e.g. $C(4)$-$T(4)$ small cancellation groups) \cite{Hoda:2017}.

Our main result is the following.
{
\begin{restatable}{thrm}{main}\label{ss_closed_under_relhyp}
 Let $G$ be a
  finitely generated group that is hyperbolic
  relative to strongly shortcut groups.  Then $G$ is strongly
  shortcut.   
\end{restatable}
}
\cref{ss_closed_under_relhyp} allows us to obtain examples of strongly shortcut groups that are not known to be strongly shortcut by any other means.  For example, let $G$ be the free product of two copies of the discrete Heisenberg group and let $\langle t \rangle$ be a maximal cyclic subgroup generated by a loxodromic element $t$ of the Bass-Serre tree of $G$.  Then the amalgamated free product $G \ast_{\langle t \rangle} G$ is hyperbolic relative to discrete Heisenberg subgroups by Dahmani \cite{dahmani_combination} and thus is strongly shortcut by \cref{ss_closed_under_relhyp} and \cite{Hoda:heisenberg}.

Our approach to proving \cref{ss_closed_under_relhyp} is to use properties of asymptotic cones of strongly shortcut groups and relatively hyperbolic groups. A result of the first named author characterizes strongly shortcut groups as those whose asymptotic cones have no isometrically embedded circles (\cite[Theorem~3.7]{Hoda:shortcut_space}), while a result of Osin and Sapir \cite[Theorem~A.1]{Drutu:2005} guarantees that asymptotic cones of relatively hyperbolic groups are tree-graded. Thus, any isometrically embedded circle in an asymptotic cone of a relatively hyperbolic group has to be contained in a piece, which is impossible if the peripherals are strongly shortcut.

In the course of the proof of \cref{ss_closed_under_relhyp} we
restrict the combinatorial horoball construction of Groves and Manning
\cite{groves_manning_horoball} to a sufficiently large finite number
of levels, thus obtaining the following result which may be of
independent interest.

\begin{restatable}{thrm}{convexify}
    \label{convexify_horoball} 
  Let $G$ be a finitely generated group that is hyperbolic relative to
  finitely generated subgroups $(H_i)_i$.  For each $i$, let $S_i$ be
  a finite generating set for $H_i$.  Then there is a connected, free
  cocompact $G$-graph $\Gamma$ with subgraphs $(\Gamma_i)_i$ such
  that, for each $i$,
  \begin{enumerate}
      \item $\Gamma_i$ is a Rips graph of $\Cay(H_i,S_i)$,
      \item $H_i$ stabilizes $\Gamma_i$,
      \item the $H_i$ action on $\Gamma_i$ is free and cocompact, and
      \item $\Gamma_i$ is convex in $\Gamma$.
  \end{enumerate}
\end{restatable}

We use \cref{convexify_horoball} to prove \cref{ss_parabolics_in_cayley}, which says that $G$ has a Cayley graph in which the $H_i$ are strongly shortcut subspaces.  

\subsection*{Structure of the paper}
In \cref{basic_defs}, we recall the Groves and Manning combinatorial horoball construction and their characterization of relative hyperbolicity. \cref{sec:convexity} is devoted to the proof of \cref{convexify_horoball}. In \cref{sec:cayley_with_ss_parabolics}, we show that a relatively hyperbolic group with strongly shortcut parabolics admits a Cayley graph in which the parabolics are strongly shortcut subspaces. Finally, we recall the notion of asymptotic cones and prove the main result \cref{ss_closed_under_relhyp} in \cref{sec:main_result}.

\subsection*{Acknowledgements}
This work was supported by Polish Narodowe Centrum Nauki
UMO-2017/25/B/ST1/01335 as well as by the grant 346300 for IMPAN from
the Simons Foundation and the matching 2015-2019 Polish MNiSW fund.
The collaboration that led to this article was initiated at the 2019
Simons Semester in Geometric and Analytic Group Theory in Warsaw.

The first named author was supported by the ERC grant GroIsRan and an
NSERC Postdoctoral Fellowship.  The second named author was supported
by CEFIPRA grant number 5801-1, ``Interactions between dynamical
systems, geometry and number theory'' at Tata Institute of Fundamental
Research and by grant number ISF 1226/19 at the Technion.

We thank the anonymous referee for helpful inputs which improved the exposition of this paper.

\section{Relative hyperbolicity \`a la Groves and Manning}
\label{basic_defs}

\begin{defn}[Groves and Manning \cite{groves_manning_horoball}]\label{defn_comb_horoball}
Let $\Lambda$ be a graph. The \defterm{combinatorial horoball based on $\Lambda$}, denoted by $\mathcal{H}\bigl(\Lambda\bigr)$, is a graph constructed as follows: 
\begin{itemize}
\item The vertex set is defined as
  $\mathcal{H}\bigl(\Lambda\bigr)^{(0)} := \Lambda^{(0)} \times
  \mathbb{N}_0$, where $\Lambda^{(0)}$ is the vertex set of $\Lambda$.
\item There are two kinds of edges in $\mathcal{H}\bigl(\Lambda\bigr)$: 

\begin{enumerate}
\item For each $n\in \mathbb{N}_0$ and each $v \in \Lambda^{(0)}$, there is a \emph{vertical} edge in $\mathcal{H}\bigl(\Lambda\bigr)$ between $(v,n)$ and $(v,n+1)$.

\item For each $n \in \mathbb{N}_0$, and each pair of vertices $(v,n)$ and $(w,n)$, there is a \emph{horizontal} edge between $(v,n)$ and $(w,n)$ if and only if $ 0 < d_{\Lambda}(v,w) \leq 2^n$.
\end{enumerate}
\end{itemize}
We denote by $\Lambda \times \{k\}$ the subgraph of
$\mathcal{H}(\Lambda)$ spanned by the vertex set
$\Lambda^{(0)} \times \{k\}$.
\end{defn}

\begin{defn}
  A \emph{rough isometry} is a quasi-isometry with multiplicative
    constant $1$.
\end{defn}

\begin{defn}
  Recall that, for each $k \in \N$, the \emph{Rips graph}
    $\Rips_{k}(\Lambda$) of a graph $\Lambda$ is the graph with vertex
    set $\Lambda^{(0)}$ and edges consisting of pairs of vertices at
    distance at most $k$ in $\Lambda$.
\end{defn}

\begin{rmk}
  \label{levels_rips} Observe that the bijection
  $\Lambda^{(0)} \xrightarrow{\cong} \Lambda^{(0)} \times \{n\}
  \subset \mathcal{H}(\Lambda)^{(0)}$ given by $v \mapsto (v,n)$
  extends to an isomorphism
  $\Rips_{2^n}(\Lambda) \xrightarrow{\cong} \Lambda \times \{n\}
  \subset \mathcal{H}(\Lambda)$.  In particular,
  $\Lambda \times \{0\}$ is isomorphic to $\Lambda$ and, for each $n$,
  the subgraph $\Lambda \times \{n\}$ is roughly isometric to
  $\Lambda$ with the metric scaled by $\frac{1}{2^n}$.
\end{rmk}

\begin{defn}[Groves and Manning \cite{groves_manning_horoball}]
  Let $\Gamma$ be a graph and
  $(\Lambda_{\alpha})_{\alpha \in \mathscr{A}}$ be a family of
  subgraphs of $\Gamma$. The \defterm{augmented space}
  $\mathcal{H}\bigl(\Gamma, (\Lambda_{\alpha})_{\alpha \in
    \mathscr{A}}\bigr)$ is the graph obtained by attaching, for each
  $\alpha \in \mathscr{A}$, the combinatorial horoball
  $\mathcal{H}\bigl(\Lambda_{\alpha}\bigr)$ to $\Gamma$ by identifying
  the subgraph $\Lambda_{\alpha} \subset \Gamma$ with the subgraph
  $\Lambda_{\alpha} \times \{0\} \subset
  \mathcal{H}\bigl(\Lambda_{\alpha}\bigr)$ along the isomorphism
  $\Lambda_{\alpha} \xrightarrow{\cong} \Lambda_{\alpha} \times \{0\}$
  given by $v \mapsto (v,0)$.
\end{defn}

\begin{defn}\label{defn_rel_hyp_graphs}
Let $\Gamma$ be a graph and $(\Lambda_{\alpha})_{\alpha \in \mathscr{A}}$ be a family of subgraphs of $\Gamma$. Then $\Gamma$ is \defterm{hyperbolic relative to $(\Lambda_{\alpha})_{\alpha \in \mathscr{A}}$} if the \defterm{augmented space} $\mathcal{H}\bigl(\Gamma, (\Lambda_{\alpha})_{\alpha \in \mathscr{A}}\bigr)$ is $\delta$-hyperbolic for some $\delta$. In that case, we call each $\Lambda_{\alpha} = \Lambda_{\alpha} \times \{0\}$ a \defterm{parabolic} subgraph of $\Gamma$.
\end{defn}

\begin{rmk}
The above definition for graphs is motivated by the characterization of relative hyperbolicity for groups by Groves and Manning (see \cref{defn_rel_hyp_groups} below). Our definition is likely equivalent to metric notions of relative hyperbolicity as investigated in \cite{sisto_metric_rh}, but we do not prove nor do we need such an equivalence for the purposes of this paper. \end{rmk}

\begin{defn}\label{defn_restricted_augmentation}
Let $\Gamma$ be a graph and $(\Lambda_{\alpha})_{\alpha \in \mathscr{A}}$ be a family of subgraphs of $\Gamma$. The \defterm{$n$-restricted augmentation} $\mathcal{H}_n\bigl(\Gamma, (\Lambda_{\alpha})_{\alpha \in \mathscr{A}}\bigr)$ is the subgraph of $\mathcal{H}\bigl(\Gamma, (\Lambda_{\alpha})_{\alpha \in \mathscr{A}}\bigr)$ spanned by the vertex set $\Gamma^{(0)} \sqcup \bigsqcup_{\alpha \in \mathscr{A}, k \in \{1, \cdots, n\}} \Lambda_{\alpha}^{(0)}\times \{k\}$. 

Similarly, the \defterm{$n$-restricted horoball} $\mathcal{H}_n\bigl(\Lambda\bigr)$ is the subgraph of the horoball $\mathcal{H}\bigl(\Lambda\bigr)$ spanned by the vertex set $\bigsqcup_{k \in \{1, \cdots, n\}} \Lambda \times \{k\}$.
\end{defn}

\begin{rmk}
  \label{action_on_n_restricted_horoball}
  If a group $G$ acts properly and cocompactly on $\Gamma$ and $(\Lambda_{\alpha})_{\alpha}$ is $G$-invariant then $G$ acts properly and cocompactly on $\mathcal{H}_n\bigl(\Gamma, (\Lambda_{\alpha})_{\alpha \in \mathscr{A}}\bigr)$.  Moreover, the embedding of $\Gamma$ in $\mathcal{H}_n\bigl(\Gamma, (\Lambda_{\alpha})_{\alpha \in \mathscr{A}}\bigr)$ is $G$-equivariant and, for any $\alpha$, the stabilizer of $\Lambda_{\alpha}\times \{n\}$ is equal to the stabilizer of $\Lambda_{\alpha}$.
\end{rmk}

\begin{rmk}\label{lem_relhyp_restricted_augmentation}
The graph $\Gamma$ is hyperbolic relative to $(\Lambda_{\alpha})_{\alpha \in \mathscr{A}}$ if and only if for each (any) $n \in \mathbb{N}_0$, $\mathcal{H}_n\bigl(\Gamma, (\Lambda_{\alpha})_{\alpha \in \mathscr{A}}\bigr)$ is hyperbolic relative to $(\Lambda_{\alpha}\times \{n\})_{\alpha \in \mathscr{A}}$.  Thus, when we speak of the \emph{parabolics} of $\mathcal{H}_n\bigl(\Gamma, (\Lambda_{\alpha})_{\alpha}\bigr)$ we will mean the top levels of the $n$-restricted horoballs $(\Lambda_{\alpha}\times \{n\})_{\alpha}$.
\end{rmk}

The following definition is due to Groves and Manning, who
  prove that it is equivalent to \emph{strong relative
    hyperbolicity} \cite{farb_rh,bowditch_rh}. We refer the reader to \cite[Theorem
  3.25]{groves_manning_horoball} for a proof and more details. A detailed study and equivalences of various notions of relative hyperbolicity was done by Hruska in \cite{hruska_rh}.

\begin{defn}\label{defn_rel_hyp_groups}
Let $G$ be a finitely generated group and let $H_1, \ldots, H_k$ be a family of finitely generated subgroups of $G$. For $1 \leq i \leq k$, let $S_i$ be a finite generating set for $H_i$ and let $S$ be a finite generating set for $G$ such that each $S_i \subset S$. 
Denote by $\Gamma$ the Cayley graph $\Cay(G,S)$ and, for $1 \leq i \leq k$, 
and $g \in G$, denote by $g\Lambda_i$ the subgraph of $\Gamma$ with vertex set $gH_i$ and edges labelled by $gS_i$.  
Then $G$ is \defterm{hyperbolic relative to $\{H_1, \cdots, H_k\}$} if $\Gamma$ is hyperbolic relative to 
$\{g\Lambda_i\}_{1 \leq i \leq k, g \in G}$.
\end{defn}

\section{Horoballs and convexity of parabolics} \label{sec:convexity}

It is well-known that given a relatively hyperbolic group, its parabolic subgroups are quasiconvex \cite[Lemma 4.15]{Drutu:2005}. The goal of this section is to prove
\cref{thm:convexify}, which says that a relatively hyperbolic graph
can be modified so that its parabolic subgraphs are convex subgraphs.
We make use of several previously known results.

\begin{lem}[{See Bridson and Haefliger
    \cite[Theorem~III.H.1.13]{Bridson:1999}}]
  \label{lem:locgeo} Let $\Gamma$ be a $\delta$-hyperbolic space and
  let $r > 8\delta + 1$.  Then there exists a constant
  $K = K(\delta,r)$ depending only on $\delta$ and $r$ such that the
  following holds.  If $\gamma$ is a path in $\Gamma$ and every
  subpath of length $r$ of $\gamma$ is a geodesic then $\gamma$ is a
  $(2\delta,K)$-quasi-geodesic.
\end{lem}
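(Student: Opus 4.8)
The statement is the classical fact that sufficiently‑local geodesics in a $\delta$‑hyperbolic space are global quasi‑geodesics, and the plan is to prove it by the usual two‑step argument, the constant $K$ emerging from the estimates. First I would reduce to a clean form: after reparametrizing $\gamma$ by arc length, say $\gamma\colon[0,L]\to\Gamma$ with endpoints $p=\gamma(0)$ and $q=\gamma(L)$, the inequality $d(\gamma(s),\gamma(t))\le|s-t|$ holds automatically, so it suffices to produce $\lambda,\epsilon$ depending only on $\delta$ and $r$ with $L\le\lambda\,d(p,q)+\epsilon$; since every subpath of $\gamma$ again has all of its length‑$r$ subpaths geodesic, applying this bound to subpaths yields $|s-t|\le\lambda\,d(\gamma(s),\gamma(t))+\epsilon$ for all $s,t$, which is the remaining quasi‑geodesic inequality. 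We may assume $L>r$, since otherwise $\gamma$ is itself a geodesic.

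\emph{Step 1: show $\operatorname{Im}(\gamma)$ lies in the $2\delta$‑neighbourhood of a geodesic $\sigma=[p,q]$.} The key is the farthest‑point trick. Let $x=\gamma(t_0)$ realize $D:=\max_t d(\gamma(t),\sigma)$ and suppose $D>2\delta$. Set $y=\gamma(\max(0,t_0-r/2))$ and $z=\gamma(\min(L,t_0+r/2))$; the subpath $[y,z]_\gamma$ has length at most $r$, hence is a geodesic through $x$, with $d(x,y)=\min(t_0,r/2)$ and $d(x,z)=\min(L-t_0,r/2)$. Let $y',z'\in\sigma$ be nearest‑point projections of $y,z$, so that $y,z,z',y'$ span a geodesic quadrilateral with one side $[y,z]_\gamma$ and with $[y',y]$, $[z',z]$ of lengths $d(y,\sigma),d(z,\sigma)\le D$. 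By $2\delta$‑thinness of geodesic quadrilaterals, $x$ lies within $2\delta$ of one of the three other sides; it cannot lie within $2\delta$ of $[y',z']\subseteq\sigma$ (that would give $D\le2\delta$), and if $y=p$ (resp.\ $z=q$) then $[y',y]$ (resp.\ $[z',z]$) degenerates to the single point $p$ (resp.\ $q$) of $\sigma$, so $x$ cannot be within $2\delta$ of it either; and $y=p$, $z=q$ simultaneously would force $L<r$. So, up to symmetry, $d(x,w)\le2\delta$ for some $w\in[y',y]$ with $[y',y]$ of length $\le D$ and $d(x,y')\ge d(x,\sigma)=D$; then $d(w,y')\ge D-2\delta$, hence $d(w,y)\le2\delta$ and $d(x,y)\le4\delta$. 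But $d(x,y)=r/2$, contradicting $r>8\delta$. Therefore $D\le2\delta$.

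\emph{Step 2: bootstrap to the linear length bound.} Let $\pi$ be a nearest‑point projection of $\operatorname{Im}(\gamma)$ onto $\sigma$, so $d(\gamma(t),\pi\gamma(t))\le2\delta$ for all $t$ by Step 1. Fix a scale $\rho$ with $4\delta<\rho\le r/2$ and sample $x_i=\gamma(i\rho)$ for $0\le i\le M:=\lfloor L/\rho\rfloor$, together with $x_{M+1}=q$. Consecutive samples lie at arc‑distance $\le\rho$, hence at exactly that distance in $\Gamma$, while each triple $x_{i-1},x_i,x_{i+1}$ lies on a geodesic subpath of $\gamma$ of length $\le2\rho\le r$; by the standard hyperbolic estimate that the nearest‑point projection of a geodesic onto $\sigma$ stays within $O(\delta)$ of the geodesic joining the projections of its endpoints, $\pi(x_i)$ is coarsely between $\pi(x_{i-1})$ and $\pi(x_{i+1})$ on $\sigma$. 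Since $\pi(x_0)=p$ and $\pi(x_{M+1})=q$ are the two ends of $\sigma$, and consecutive projections satisfy $d(\pi x_i,\pi x_{i+1})\ge d(x_i,x_{i+1})-4\delta=\rho-4\delta>0$, a telescoping argument shows the $\pi(x_i)$ progress monotonically along $\sigma$ up to bounded error, so $M(\rho-4\delta)\le d(p,q)+O(\delta)$ and hence $L\le(M+1)\rho\le\lambda\,d(p,q)+\epsilon$ with $\lambda,\epsilon$ depending only on $\delta$ and $r$; optimizing $\rho$ within the slack afforded by $r>8\delta+1$ recovers the stated $(2\delta,K)$ bound.

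I expect the conceptual content to lie entirely in the farthest‑point argument of Step 1, which is short and self‑contained; that is also where the hypothesis on $r$ is essential. The real care is needed in Step 2: establishing the coarse monotonicity of $\pi\circ\gamma$ cleanly, handling the last (short) sampling interval, and --- in the graph setting, where paths are combinatorial and ``length $r$'' is discrete --- making sure the additive $O(\delta)$ errors are absorbed into the constants. The extra ``$+1$'' in $r>8\delta+1$, over the bare $r>8\delta$ that Step 1 alone needs, is precisely the slack that makes this accounting go through.
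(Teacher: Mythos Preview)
The paper does not prove this lemma at all; it merely states it with a citation to Bridson--Haefliger \cite[Theorem~III.H.1.13]{Bridson:1999}, so there is no in-paper proof to compare against. Your proposal is precisely the standard two-step argument from that reference --- the farthest-point trick to get the $2\delta$-neighbourhood statement, followed by sampling and projecting to $\sigma$ to extract the linear length bound --- and is correct in outline.

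One small point worth flagging: the multiplicative constant in the statement is written as $2\delta$, but the argument you give (and the one in Bridson--Haefliger) naturally yields a multiplicative constant of the form $\rho/(\rho-4\delta)$, which is at most $2$ rather than $2\delta$. Your closing remark that ``optimizing $\rho$ \ldots\ recovers the stated $(2\delta,K)$ bound'' glosses over this, and in fact the bound you obtain is not literally $2\delta$ unless $\delta\ge 1$ (which is harmless in the graph setting the paper cares about, and in any case only fixed constants depending on $\delta,r$ are ever used downstream). You are right that the care in Step~2 lies in the coarse monotonicity of the projection; the cleanest way to handle it is to observe that since $y_{i-1},y_i,y_{i+1}$ lie on a common geodesic and each is within $2\delta$ of its projection, the triangle inequality forces $d(\pi y_{i-1},\pi y_{i+1})\ge d(y_{i-1},y_{i+1})-4\delta = d(\pi y_{i-1},\pi y_i)+d(\pi y_i,\pi y_{i+1}) - O(\delta)$, which is the monotonicity up to bounded error that you need.
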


\begin{thrm}[{See Bridson and Haefliger \cite[Theorem~III.H.1.7]{Bridson:1999}}]
  \label{thm:morslem} Let $\Gamma$ be a $\delta$-hyperbolic graph.
  Let $L > 0$ and $K \ge 0$.  Then there exists a constant
  $M = M(\delta,L,K)$ such that for any two $(L,K)$-quasigeodesics
  $\beta_1$ and $\beta_2$ with the same endpoints, the images
  $\im(\beta_1)$ and $\im(\beta_2)$ are at Hausdorff distance at most
  $M$.
\end{thrm}

\begin{lem}\label{lem:georestriction}
Let $\mathcal{H}_n\bigl(\Lambda\bigr)$ be an $n$-restricted horoball. 
Let $v_1, v_2 \in \mathcal{H}_n\bigl(\Lambda\bigr)$ be given. The following hold:
\begin{enumerate}
\item There exists a geodesic $\beta$ between $v_1,v_2$ whose image consists of at most two vertical segments and one horizontal segment. If the horizontal segment is not contained in $\Lambda \times \{n\}$, then it is of length at most $3$. Further, any geodesic between the two points is at Hausdorff distance at most $4$ from $\im(\beta)$.

\item If the horizontal segment of $\im(\beta)$ is contained in $\Lambda\times \{K\}$, then the image of any geodesic between $v_1$ and $v_2$ is disjoint from $\Lambda\times \{K'\}$ for all $K' > K$.

\item Moreover, if $k$ is the least number such that either $v_1$ or $v_2$ is contained in $\Lambda\times \{k\}$, then the image of any geodesic between the points is contained in $\mathcal{H}_n\bigl(\Lambda\bigr) \setminus \mathcal{H}_{k-1}\bigl(\Lambda\bigr)$. 
\end{enumerate}
\end{lem}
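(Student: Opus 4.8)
The plan is to analyze geodesics of $\mathcal{H}_n(\Lambda)$ by combinatorial surgery, following the treatment of geodesics in unrestricted combinatorial horoballs of Groves and Manning and keeping track of the ceiling at level $n$. Write $v_1=(a,p)$ and $v_2=(b,q)$. Each edge of $\mathcal{H}_n(\Lambda)$ is either \emph{vertical} (changing the level by $\pm1$, fixing the $\Lambda$-coordinate) or \emph{horizontal at level $k$} (fixing the level at $k$, moving the $\Lambda$-coordinate by $d_\Lambda$-distance at most $2^k$). Two length-non-increasing moves drive everything. The \emph{push-up move} replaces an ascending subpath $(x,k)\to(y,k)\to(y,k+1)$ by $(x,k)\to(x,k+1)\to(y,k+1)$, and symmetrically on a descending subpath; it is legitimate whenever $k+1\le n$ because $d_\Lambda(x,y)\le2^k\le2^{k+1}$, and iterating it drags every horizontal edge up to the peak level. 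The \emph{de-dip move}: if a geodesic leaves level $k$ going down at a vertex $(u,k)$ and next returns to level $k$ at a vertex $(w,k)$, the intervening subpath has some length $L\ge3$ (it cannot be $2$, which would force a return to the same vertex) with at most $L-2$ horizontal edges, all below level $k$, so $d_\Lambda(u,w)\le(L-2)2^{k-1}$; replacing that subpath by a geodesic of $\Lambda_k=\Rips_{2^k}(\Lambda)$ between $(u,k)$ and $(w,k)$, which has length $\lceil d_\Lambda(u,w)/2^k\rceil\le\lceil(L-2)/2\rceil<L$, strictly shortens the geodesic, a contradiction. Hence every geodesic has a \emph{unimodal} level function: non-decreasing up to a single peak level, then non-increasing.

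For the first sentence of (1), start with any geodesic and apply the push-up move on both sides until no horizontal edge lies below the peak level $s$; the result ascends the $a$-column from $v_1$ to level $s$, crosses at level $s$ along a $\Lambda_s$-geodesic from $a$ to $b$ (of length $\lceil d_\Lambda(a,b)/2^s\rceil$), and descends the $b$-column to $v_2$. Writing $m$ for the length of the crossing, if $m\ge4$ and $s<n$ then pushing the whole crossing up one level replaces it by one of length $\lceil m/2\rceil$ while adding two vertical edges, which does not increase the total length since $\lceil m/2\rceil+2\le m$ for $m\ge4$; iterating, the process halts only when the crossing length is at most $3$ or the peak has reached $n$. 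Let $\beta$ be the resulting geodesic: its crossing has length at most $3$ unless it lies in $\Lambda_n$.

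For (2) and (3): a unimodal function attains its minimum at an endpoint, so the level function of any geodesic from $v_1$ to $v_2$ is everywhere at least $\min(p,q)=k$, which is (3). Unimodality also forces a geodesic of peak level $s$ to have exactly $(s-p)+(s-q)$ vertical edges, hence exactly $d(v_1,v_2)-(s-p)-(s-q)$ horizontal edges; since these move the $\Lambda$-coordinate from $a$ to $b$ by steps of size at most $2^s$, their number is at least $\lceil D/2^s\rceil$ where $D:=d_\Lambda(a,b)$, which forces $s$ to be a minimizer of $f(s):=2s+\lceil D/2^s\rceil$ over $\max(p,q)\le s\le n$. A short calculation — the forward differences of $f$ are non-decreasing in $s$, and $\lceil D/2^s\rceil-\lceil D/2^{s+1}\rceil$ can equal $2$ only for $D\in(3\cdot2^s,5\cdot2^s]$ — shows that the minimizers of $f$ form a block of at most two consecutive integers, and the reduction above shows that the peak of $\beta$ is the largest of them, call it $K$. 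Thus every geodesic from $v_1$ to $v_2$ has peak level in $\{K-1,K\}$ and, by unimodality, stays at level at most $K$; since the crossing of $\beta$ lies in $\Lambda_K$, this proves (2).

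Finally, for the Hausdorff statement in (1), assume as in the preceding sentence that the crossing of $\beta$ is not contained in $\Lambda_n$, so $K<n$ and the crossing has length $m(K)=\lceil D/2^K\rceil\le3$; the claim that every geodesic $\gamma$ is within Hausdorff distance $4$ of $\beta$ is the main obstacle. Running the distance bookkeeping of the previous paragraph on each ascending initial subgeodesic of $\gamma$ shows that when $\gamma$ first reaches a level $\ell$ its $\Lambda$-coordinate lies within $\Lambda_\ell$-distance $1$ of $a$ (indeed that subgeodesic then has at most one horizontal edge below level $\ell$), and symmetrically the $\Lambda$-coordinate of $\gamma$ at the last time it is at level $\ell$ lies within $1$ of $b$. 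Since $\gamma$ has peak level in $\{K-1,K\}$ and hence at most $5$ horizontal edges in total, a short argument with unimodality then gives that every vertex of $\gamma$ at a level $\ell\le K$ is within $\Lambda_\ell$-distance $4$ of $a$ or of $b$. The vertical parts of $\beta$ are the $a$-column from level $p$ and the $b$-column from level $q$, each up to level $K$, and the crossing of $\beta$ is a path of length at most $3$ at level $K$ from $a$ to $b$; matching each vertex of $\gamma$ (resp.\ of $\beta$) to a nearby vertex of the other at the same or an adjacent level, and accounting for the off-by-one in peak levels, a routine case analysis yields the Hausdorff bound $4$. The crux — and the one genuinely delicate point — is precisely this control on how far the $\Lambda$-coordinate of an arbitrary geodesic can drift at each level, which is exactly where unimodality and the rigidity of the minimizer set of $f$ are used.
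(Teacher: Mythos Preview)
Your approach is essentially the same as the paper's: both follow Groves--Manning's Lemma~3.10, using level-halving to establish unimodality of the level function and then constructing $\beta$ by pushing horizontal edges up to the peak. Your treatment of (2) and (3) via the explicit minimisation of $f(s)=2s+\lceil D/2^s\rceil$ is more systematic than the paper's, which argues these points pictorially; the paper in turn leaves the Hausdorff bound entirely ``as an exercise'', so on that point you give strictly more.

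There is, however, a genuine gap in your Hausdorff argument: you explicitly assume $K<n$ (``assume as in the preceding sentence that the crossing of $\beta$ is not contained in $\Lambda_n$''), and your subsequent bound ``$\gamma$ has at most $5$ horizontal edges in total'' fails when $K=n$, since then the crossing of $\beta$ in $\Lambda_n$ can be arbitrarily long and so can the crossing of any other geodesic. The statement of the lemma makes the Hausdorff bound unconditional, so the case $K=n$ must be handled separately. It is not hard---when the peak is forced to the ceiling, any two geodesics have peak in $\{n-1,n\}$, and one compares their long crossings at levels $n-1$ and $n$ directly using your first/last-visit bound (the $\Lambda_\ell$-coordinate is within $1$ of $a$ or $b$ at first/last visit to level $\ell$) together with the fact that a single vertical edge halves $\Lambda$-distances---but it does need to be said.
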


\cref{lem:georestriction} is essentially a re-statement of Lemma~3.10 of \cite{groves_manning_horoball} in the context of restricted horoballs, and our proof below, given for the sake of completeness, is almost identical to theirs.

Let us first make the convention that a vertical segment of a path $\gamma$ is a subpath whose image is the union of vertical edges in a horoball. Similarly a horizontal segment is a subpath whose image is disjoint from the set of vertical edges.
\begin{proof}
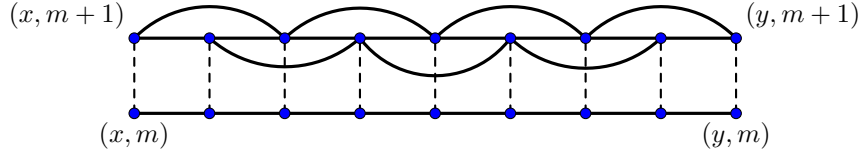
\begin{figure}
    \centering
    \begin{tikzpicture}[line cap=round,line join=round,>=triangle 45,x=1cm,y=1cm]
\draw [line width=1.2pt] (-4,4)-- (4,4);
\draw [line width=1.2pt] (-4,5)-- (4,5);
\draw [shift={(-3,4.02)},line width=1.2pt]  plot[domain=0.7752974968121261:2.3662951567776673,variable=\t]({1*1.4001428498549706*cos(\t r)+0*1.4001428498549706*sin(\t r)},{0*1.4001428498549706*cos(\t r)+1*1.4001428498549706*sin(\t r)});
\draw [shift={(-2,6.123684210526317)},line width=1.2pt]  plot[domain=3.9851654978244673:5.439612462944912,variable=\t]({1*1.5042161430413357*cos(\t r)+0*1.5042161430413357*sin(\t r)},{0*1.5042161430413357*cos(\t r)+1*1.5042161430413357*sin(\t r)});
\draw [shift={(-1,3.9505)},line width=1.2pt]  plot[domain=0.8095457010490638:2.3320469525407295,variable=\t]({1*1.4496379720468133*cos(\t r)+0*1.4496379720468133*sin(\t r)},{0*1.4496379720468133*cos(\t r)+1*1.4496379720468133*sin(\t r)});
\draw [shift={(1,4.02)},line width=1.2pt]  plot[domain=0.7752974968121267:2.3662951567776664,variable=\t]({1*1.4001428498549713*cos(\t r)+0*1.4001428498549713*sin(\t r)},{0*1.4001428498549713*cos(\t r)+1*1.4001428498549713*sin(\t r)});
\draw [shift={(3,4.02)},line width=1.2pt]  plot[domain=0.7752974968121261:2.3662951567776673,variable=\t]({1*1.4001428498549706*cos(\t r)+0*1.4001428498549706*sin(\t r)},{0*1.4001428498549706*cos(\t r)+1*1.4001428498549706*sin(\t r)});
\draw [shift={(0,5.75)},line width=1.2pt]  plot[domain=3.7850937623830774:5.639684198386302,variable=\t]({1*1.25*cos(\t r)+0*1.25*sin(\t r)},{0*1.25*cos(\t r)+1*1.25*sin(\t r)});
\draw [shift={(2,6.05)},line width=1.2pt]  plot[domain=3.9513762261599594:5.4734017346094195,variable=\t]({1*1.45*cos(\t r)+0*1.45*sin(\t r)},{0*1.45*cos(\t r)+1*1.45*sin(\t r)});
\draw [line width=0.8pt,dashed] (-4,4)-- (-4,5);
\draw [line width=0.8pt,dashed] (-3,4)-- (-3,5);
\draw [line width=0.8pt,dashed] (-2,4)-- (-2,5);
\draw [line width=0.8pt,dashed] (-1,4)-- (-1,5);
\draw [line width=0.8pt,dashed] (0,4)-- (0,5);
\draw [line width=0.8pt,dashed] (1,4)-- (1,5);
\draw [line width=0.8pt,dashed] (2,4)-- (2,5);
\draw [line width=0.8pt,dashed] (3,4)-- (3,5);
\draw [line width=0.8pt,dashed] (4,4)-- (4,5);

\draw [fill=blue] (-4,4) circle (2pt);

\draw [below] (-4,4) node {$(x,m)$};

\draw [fill=blue] (4,4) circle (2pt);

\draw [below] (4,4) node {$(y,m)$};

\draw [fill=blue] (-4,5) circle (2pt);

\draw [above left] (-4,5) node {$(x,m+1)$};

\draw [fill=blue] (4,5) circle (2pt);

\draw [above right] (4,5) node {$(y,m+1)$};

\draw [fill=blue] (-2,5) circle (2pt);
\draw [fill=blue] (-3,5) circle (2pt);
\draw [fill=blue] (-1,5) circle (2pt);
\draw [fill=blue] (0,5) circle (2pt);
\draw [fill=blue] (2,5) circle (2pt);
\draw [fill=blue] (1,5) circle (2pt);
\draw [fill=blue] (3,5) circle (2pt);
\draw [fill=blue] (-3,4) circle (2pt);
\draw [fill=blue] (-2,4) circle (2pt);
\draw [fill=blue] (-1,4) circle (2pt);
\draw [fill=blue] (0,4) circle (2pt);
\draw [fill=blue] (1,4) circle (2pt);
\draw [fill=blue] (2,4) circle (2pt);
\draw [fill=blue] (3,4) circle (2pt);

\end{tikzpicture}
    \caption{The ($\Lambda\times \{m\}$)-distance between $(x,m)$ and $(y,m)$ is $8$ while the ($\Lambda\times \{m+1\}$)-distance between $(x,m+1)$ and $(y,m+1)$ is $4$.}
    \label{fig:deeper_shorter}
\end{figure}
We start the proof with a basic observation. Let $1 \leq m < n$ and let $(x,m)$ and $(y,m)$ be two points in $\Lambda\times \{m\}$.
If $(x,m)$ and $(y,m)$ are at ($\Lambda\times \{m\}$)-distance $D$, note that the ($\Lambda\times \{m+1\}$)-distance between $(x,m+1)$ and $(y,m+1)$ is $\bigl\lceil\frac{D}{2}\bigr\rceil$ (see \cref{fig:deeper_shorter}). 
Similarly, the ($\Lambda\times \{m+k\}$)-distance between $(x,m+k)$ and $(y,m+k)$ is $\bigl\lceil\frac{D}{2^k}\bigr\rceil$. This observation implies the following:

\begin{enumerate}
\item Assume that a geodesic path contains a horizontal segment in $\Lambda\times \{m\}$ of length more than one. Assume that this horizontal segment is not contained in a strictly larger horizontal segment of the geodesic. Then the vertical segment immediately preceding the horizontal segment is an \emph{ascending} segment, in the sense that it is a vertical segment from some $\Lambda\times \{m-k\}$ to $\Lambda\times \{m\}$. Similarly, the immediate successor of the horizontal segment is a \emph{descending} segment. See \cref{fig:horizontal_maximal} for an illustration.

\item Any geodesic path with a descending segment at $(x,m)$ 
cannot ascend back to $\Lambda\times \{m\}$ in the future (see \cref{fig:descend_ascend}). In other words, no ascending segment follows a descending segment.

\item Any geodesic path contains at most two maximal descending (respectively ascending) segments.  See \cref{fig:descend_descend_descend}. 
\end{enumerate}

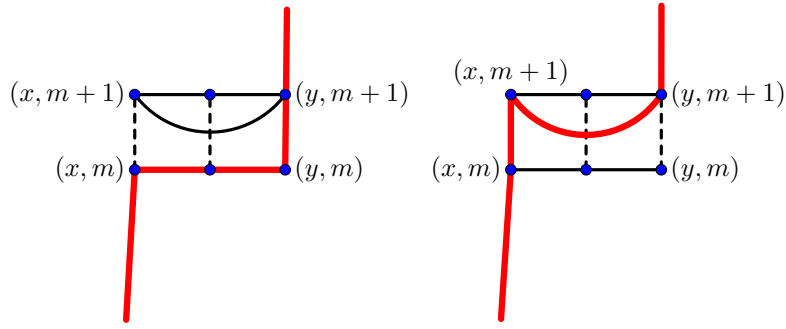
\begin{figure}
    \centering
    \begin{tikzpicture}[line cap=round,line join=round,>=triangle 45,x=1cm,y=1cm]
\draw [shift={(0,5.75)},line width=1.2pt]  plot[domain=3.7850937623830774:5.639684198386302,variable=\t]({1*1.25*cos(\t r)+0*1.25*sin(\t r)},{0*1.25*cos(\t r)+1*1.25*sin(\t r)});
\draw [line width=1.2pt,dashed] (-1,4)-- (-1,5);
\draw [line width=1.2pt,dashed] (0,4)-- (0,5);
\draw [line width=1.2pt,dashed] (1,4)-- (1,5);
\draw [line width=1.2pt] (-1,4)-- (1,4);
\draw [line width=1.2pt] (-1,5)-- (1,5);
\draw [line width=2.4pt,color=red] (-1.114565491450606,2.004012331929159)-- (-1,4);
\draw [line width=2.4pt,color=red] (-1,4)-- (1,4);
\draw [line width=2.4pt,color=red] (1,4)-- (1.0233591235908326,6.129538737516927);
\draw [line width=1.2pt] (4,4)-- (5,4);
\draw [line width=1.2pt] (5,4)-- (6,4);
\draw [line width=1.2pt] (4,5)-- (5,5);
\draw [line width=1.2pt] (5,5)-- (6,5);
\draw [line width=1.2pt,dashed] (4,4)-- (4,5);
\draw [line width=1.2pt,dashed] (5,4)-- (5,5);
\draw [line width=1.2pt,dashed] (6,4)-- (6,5);
\draw [line width=2.4pt,color=red] (3.8688890001423992,2.015632578651128)-- (4,4);
\draw [line width=2.4pt,color=red] (4,4)-- (4,5);
\draw [line width=2.4pt,color=red] (6,5)-- (6.002195633765171,6.181854945490882);
\draw [shift={(5,5.6618412789196295)},line width=2.4pt,color=red]  plot[domain=3.7262471600104305:5.698530800758949,variable=\t]({1*1.1991805028776814*cos(\t r)+0*1.1991805028776814*sin(\t r)},{0*1.1991805028776814*cos(\t r)+1*1.1991805028776814*sin(\t r)});
\draw [fill=blue] (-1,5) circle (2pt);
\draw [fill=blue] (0,5) circle (2pt);
\draw [fill=blue] (1,5) circle (2pt);
\draw [fill=blue] (-1,4) circle (2pt);
\draw [fill=blue] (0,4) circle (2pt);
\draw [fill=blue] (1,4) circle (2pt);
\draw [fill=blue] (4,4) circle (2pt);
\draw [fill=blue] (5,4) circle (2pt);
\draw [fill=blue] (6,4) circle (2pt);
\draw [fill=blue] (4,5) circle (2pt);
\draw [fill=blue] (5,5) circle (2pt);
\draw [fill=blue] (6,5) circle (2pt);

\draw [left] (-1,4) node {$(x,m)$};
\draw [right] (1,4) node {$(y,m)$};
\draw [left] (-1,5) node {$(x,m+1)$};
\draw [right] (1,5) node {$(y,m+1)$};

\draw [left] (4,4) node {$(x,m)$};
\draw [right] (6,4) node {$(y,m)$};
\draw [above] (4,5) node {$(x,m+1)$};
\draw [right] (6,5) node {$(y,m+1)$};

\end{tikzpicture}
    \caption{The red path between $(x,m)$ and $(y,m+1)$ on the left is longer than the red path on the right.}
    \label{fig:horizontal_maximal}
\end{figure}

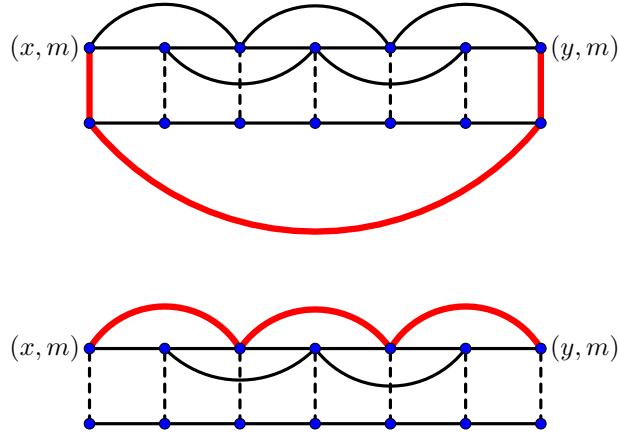
\begin{figure}
    \centering
    \begin{tikzpicture}[line cap=round,line join=round,>=triangle 45,x=1cm,y=1cm]

\draw [line width=1.2pt] (-3,4)-- (-2,4);
\draw [line width=1.2pt] (-2,4)-- (-1,4);
\draw [line width=1.2pt] (-1,4)-- (0,4);
\draw [line width=1.2pt] (0,4)-- (1,4);
\draw [line width=1.2pt] (1,4)-- (2,4);
\draw [line width=1.2pt] (2,4)-- (3,4);
\draw [line width=1.2pt] (-3,3)-- (-2,3);
\draw [line width=1.2pt] (-2,3)-- (-1,3);
\draw [line width=1.2pt] (-1,3)-- (0,3);
\draw [line width=1.2pt] (0,3)-- (1,3);
\draw [line width=1.2pt] (1,3)-- (2,3);
\draw [line width=1.2pt] (2,3)-- (3,3);
\draw [shift={(-2,3.4282758620689653)},line width=1.2pt]  plot[domain=0.5193688975489417:2.6222237560408517,variable=\t]({1*1.1518977775362642*cos(\t r)+0*1.1518977775362642*sin(\t r)},{0*1.1518977775362642*cos(\t r)+1*1.1518977775362642*sin(\t r)});
\draw [shift={(0,3.387142857142856)},line width=1.2pt]  plot[domain=0.5498196829613459:2.5917729706284476,variable=\t]({1*1.1728571428571435*cos(\t r)+0*1.1728571428571435*sin(\t r)},{0*1.1728571428571435*cos(\t r)+1*1.1728571428571435*sin(\t r)});
\draw [shift={(2,3.428275862068966)},line width=1.2pt]  plot[domain=0.5193688975489408:2.622223756040852,variable=\t]({1*1.1518977775362635*cos(\t r)+0*1.1518977775362635*sin(\t r)},{0*1.1518977775362635*cos(\t r)+1*1.1518977775362635*sin(\t r)});
\draw [shift={(-1,4.80125)},line width=1.2pt]  plot[domain=3.81709532631806:5.6076826344513195,variable=\t]({1*1.2814060880532756*cos(\t r)+0*1.2814060880532756*sin(\t r)},{0*1.2814060880532756*cos(\t r)+1*1.2814060880532756*sin(\t r)});
\draw [shift={(1,4.797916666666666)},line width=1.2pt]  plot[domain=3.8150619790012943:5.609715981768085,variable=\t]({1*1.2793244338104557*cos(\t r)+0*1.2793244338104557*sin(\t r)},{0*1.2793244338104557*cos(\t r)+1*1.2793244338104557*sin(\t r)});
\draw [line width=1.2pt,dashed] (-3,4)-- (-3,3);
\draw [line width=1.2pt,dashed] (-2,3)-- (-2,4);
\draw [line width=1.2pt,dashed] (-1,4)-- (-1,3);
\draw [line width=1.2pt,dashed] (0,3)-- (0,4);
\draw [line width=1.2pt,dashed] (1,4)-- (1,3);
\draw [line width=1.2pt,dashed] (2,3)-- (2,4);
\draw [line width=1.2pt,dashed] (3,4)-- (3,3);
\draw [line width=2.4pt,color=red] (-3,4)-- (-3,3);
\draw [shift={(0,5.39375)},line width=2.4pt,color=red]  plot[domain=3.8150619790012943:5.609715981768085,variable=\t]({1*3.8379733014313686*cos(\t r)+0*3.8379733014313686*sin(\t r)},{0*3.8379733014313686*cos(\t r)+1*3.8379733014313686*sin(\t r)});

 \draw [line width=2.4pt,color=red] (3,3)-- (3,4);

\draw [fill=blue] (-3,4) circle (2pt);
\draw [fill=blue] (-2,4) circle (2pt);
\draw [fill=blue] (-1,4) circle (2pt);
\draw [fill=blue] (0,4) circle (2pt);
\draw [fill=blue] (1,4) circle (2pt);
\draw [fill=blue] (2,4) circle (2pt);
\draw [fill=blue] (3,4) circle (2pt);
\draw [fill=blue] (-3,3) circle (2pt);
\draw [fill=blue] (-2,3) circle (2pt);
\draw [fill=blue] (-1,3) circle (2pt);
\draw [fill=blue] (0,3) circle (2pt);
\draw [fill=blue] (1,3) circle (2pt);
\draw [fill=blue] (2,3) circle (2pt);
\draw [fill=blue] (3,3) circle (2pt);

\draw[left] (-3,4) node {$(x,m)$};
\draw[right] (3,4) node {$(y,m)$};

\begin{scope}[xshift=-10cm, yshift=-4cm]

\draw [line width=1.2pt] (7,4)-- (13,4);
\draw [line width=1.2pt] (7,3)-- (13,3);
\draw [line width=1.2pt,dashed] (7,3)-- (7,4);
\draw [line width=1.2pt,dashed] (8,3)-- (8,4);
\draw [line width=1.2pt,dashed] (9,3)-- (9,4);
\draw [line width=1.2pt,dashed] (10,3)-- (10,4);
\draw [line width=1.2pt,dashed] (11,3)-- (11,4);
\draw [line width=1.2pt,dashed] (12,3)-- (12,4);
\draw [line width=1.2pt,dashed] (13,3)-- (13,4);
\draw [shift={(8,3.3875)},line width=1.2pt]  plot[domain=0.5495600135263804:2.5920326400634126,variable=\t]({1*1.1726705632870646*cos(\t r)+0*1.1726705632870646*sin(\t r)},{0*1.1726705632870646*cos(\t r)+1*1.1726705632870646*sin(\t r)});
\draw [shift={(10,3.298846153846151)},line width=1.2pt]  plot[domain=0.6114999380743488:2.5300927155154445,variable=\t]({1*1.2213176146999334*cos(\t r)+0*1.2213176146999334*sin(\t r)},{0*1.2213176146999334*cos(\t r)+1*1.2213176146999334*sin(\t r)});
\draw [shift={(12,3.3875)},line width=1.2pt]  plot[domain=0.5495600135263797:2.5920326400634135,variable=\t]({1*1.1726705632870642*cos(\t r)+0*1.1726705632870642*sin(\t r)},{0*1.1726705632870642*cos(\t r)+1*1.1726705632870642*sin(\t r)});
\draw [shift={(9,4.98047619047619)},line width=1.2pt]  plot[domain=3.9171329973404307:5.507644963428948,variable=\t]({1*1.40047619047619*cos(\t r)+0*1.40047619047619*sin(\t r)},{0*1.40047619047619*cos(\t r)+1*1.40047619047619*sin(\t r)});
\draw [shift={(11,4.75)},line width=1.2pt]  plot[domain=3.7850937623830774:5.639684198386302,variable=\t]({1*1.25*cos(\t r)+0*1.25*sin(\t r)},{0*1.25*cos(\t r)+1*1.25*sin(\t r)});
\draw [shift={(8,3.3875)},line width=2.4pt,color=red]  plot[domain=0.5495600135263773:2.5920326400634144,variable=\t]({1*1.1726705632870615*cos(\t r)+0*1.1726705632870615*sin(\t r)},{0*1.1726705632870615*cos(\t r)+1*1.1726705632870615*sin(\t r)});
\draw [shift={(10,3.2988461538461493)},line width=2.4pt,color=red]  plot[domain=0.61149993807435:2.530092715515443,variable=\t]({1*1.2213176146999345*cos(\t r)+0*1.2213176146999345*sin(\t r)},{0*1.2213176146999345*cos(\t r)+1*1.2213176146999345*sin(\t r)});
\draw [shift={(12,3.3875)},line width=2.4pt,color=red]  plot[domain=0.5495600135263811:2.592032640063412,variable=\t]({1*1.172670563287065*cos(\t r)+0*1.172670563287065*sin(\t r)},{0*1.172670563287065*cos(\t r)+1*1.172670563287065*sin(\t r)});

\draw [fill=blue] (7,4) circle (2pt);
\draw [fill=blue] (13,4) circle (2pt);
\draw [fill=blue] (7,3) circle (2pt);
\draw [fill=blue] (13,3) circle (2pt);
\draw [fill=blue] (8,3) circle (2pt);
\draw [fill=blue] (8,4) circle (2pt);
\draw [fill=blue] (9,3) circle (2pt);
\draw [fill=blue] (9,4) circle (2pt);
\draw [fill=blue] (10,3) circle (2pt);
\draw [fill=blue] (10,4) circle (2pt);
\draw [fill=blue] (11,3) circle (2pt);
\draw [fill=blue] (11,4) circle (2pt);
\draw [fill=blue] (12,3) circle (2pt);
\draw [fill=blue] (12,4) circle (2pt);

\draw[left] (7,4) node {$(x,m)$};
\draw[right] (13,4) node {$(y,m)$};
\end{scope}

\end{tikzpicture}
    \caption{The red path between $(x,m)$ and $(y,m)$ on the bottom panel is shorter than the one on the top panel.}
    \label{fig:descend_ascend}
\end{figure}

\begin{figure}
    \centering
    \begin{tikzpicture}[line cap=round,line join=round,>=triangle 45,x=1cm,y=1cm]

\draw [line width=1.2pt] (-6,2)-- (-2,2);
\draw [line width=1.2pt] (-6,1)-- (-2,1);
\draw [line width=1.2pt] (-6,3)-- (-2,3);
\draw [line width=1.2pt] (-6,0)-- (-2,0);
\draw [line width=1.2pt,dashed] (-6,3)-- (-6,0);
\draw [line width=1.2pt,dashed] (-5,3)-- (-5,0);
\draw [line width=1.2pt,dashed] (-4,0)-- (-4,3);
\draw [line width=1.2pt,dashed] (-3,3)-- (-3,0);
\draw [line width=1.2pt,dashed] (-2,3)-- (-2,0);
\draw [shift={(-5,2.53875)},line width=1.2pt]  plot[domain=0.4321699445935726:2.709422708996221,variable=\t]({1*1.10125*cos(\t r)+0*1.10125*sin(\t r)},{0*1.10125*cos(\t r)+1*1.10125*sin(\t r)});
\draw [shift={(-3,2.4279310344827594)},line width=1.2pt]  plot[domain=0.5196287396916754:2.6219639138981172,variable=\t]({1*1.1520689655172407*cos(\t r)+0*1.1520689655172407*sin(\t r)},{0*1.1520689655172407*cos(\t r)+1*1.1520689655172407*sin(\t r)});
\draw [shift={(-4,3.8016666666666663)},line width=1.2pt]  plot[domain=3.81734903007035:5.6074289306990295,variable=\t]({1*1.2816666666666665*cos(\t r)+0*1.2816666666666665*sin(\t r)},{0*1.2816666666666665*cos(\t r)+1*1.2816666666666665*sin(\t r)});
\draw [line width=2.4pt,color=red] (-6,3)-- (-6,2);
\draw [line width=2.4pt,color=red] (-6,2)-- (-5,2);
\draw [line width=2.4pt,color=red] (-5,2)-- (-5,1);
\draw [line width=2.4pt,color=red] (-5,1)-- (-4,1);
\draw [line width=2.4pt,color=red] (-4,1)-- (-4,0);

\draw [fill=blue] (-6,3) circle (2pt);
\draw [fill=blue] (-2,3) circle (2pt);
\draw [fill=blue] (-6,2) circle (2pt);
\draw [fill=blue] (-2,2) circle (2pt);
\draw [fill=blue] (-6,1) circle (2pt);
\draw [fill=blue] (-2,1) circle (2pt);
\draw [fill=blue] (-6,0) circle (2pt);
\draw [fill=blue] (-2,0) circle (2pt);
\draw [fill=blue] (-5,3) circle (2pt);
\draw [fill=blue] (-5,0) circle (2pt);
\draw [fill=blue] (-4,0) circle (2pt);
\draw [fill=blue] (-4,3) circle (2pt);
\draw [fill=blue] (-3,3) circle (2pt);
\draw [fill=blue] (-3,0) circle (2pt);
\draw [fill=blue] (-5,2) circle (2pt);
\draw [fill=blue] (-5,1) circle (2pt);
\draw [fill=blue] (-4,1) circle (2pt);
\draw [fill=blue] (-4,2) circle (2pt);
\draw [fill=blue] (-3,2) circle (2pt);
\draw [fill=blue] (-3,1) circle (2pt);

\draw[left] (-6,3) node {$(x,m)$};
\draw[below] (-4,0) node {$(y,m-3)$};

\begin{scope}[xshift=-3cm]

\draw [line width=1.2pt] (2,3)-- (6,3);
\draw [line width=1.2pt] (2,2)-- (6,2);
\draw [line width=1.2pt] (2,1)-- (6,1);
\draw [line width=1.2pt] (2,0)-- (6,0);
\draw [line width=1.2pt,dashed] (2,0)-- (2,3);
\draw [line width=1.2pt,dashed] (3,0)-- (3,3);
\draw [line width=1.2pt,dashed] (4,3)-- (4,0);
\draw [line width=1.2pt,dashed] (5,0)-- (5,3);
\draw [line width=1.2pt,dashed] (6,3)-- (6,0);
\draw [line width=2.4pt,color=red] (4,3)-- (4,0);
\draw [shift={(3,2.3871428571428575)},line width=1.2pt]  plot[domain=0.5498196829613449:2.5917729706284485,variable=\t]({1*1.1728571428571428*cos(\t r)+0*1.1728571428571428*sin(\t r)},{0*1.1728571428571428*cos(\t r)+1*1.1728571428571428*sin(\t r)});
\draw [shift={(5,2.4279310344827594)},line width=1.2pt]  plot[domain=0.5196287396916757:2.6219639138981177,variable=\t]({1*1.152068965517241*cos(\t r)+0*1.152068965517241*sin(\t r)},{0*1.152068965517241*cos(\t r)+1*1.152068965517241*sin(\t r)});
\draw [shift={(4,3.98047619047619)},line width=1.2pt]  plot[domain=3.9171329973404307:5.507644963428948,variable=\t]({1*1.40047619047619*cos(\t r)+0*1.40047619047619*sin(\t r)},{0*1.40047619047619*cos(\t r)+1*1.40047619047619*sin(\t r)});
\draw [shift={(3,2.3871428571428575)},line width=2.4pt,color=red]  plot[domain=0.5498196829613449:2.5917729706284485,variable=\t]({1*1.1728571428571428*cos(\t r)+0*1.1728571428571428*sin(\t r)},{0*1.1728571428571428*cos(\t r)+1*1.1728571428571428*sin(\t r)});

\draw [fill=blue] (2,3) circle (2pt);
\draw [fill=blue] (6,3) circle (2pt);
\draw [fill=blue] (3,3) circle (2pt);
\draw [fill=blue] (4,3) circle (2pt);
\draw [fill=blue] (5,3) circle (2pt);
\draw [fill=blue] (2,2) circle (2pt);
\draw [fill=blue] (6,2) circle (2pt);
\draw [fill=blue] (2,1) circle (2pt);
\draw [fill=blue] (6,1) circle (2pt);
\draw [fill=blue] (2,0) circle (2pt);
\draw [fill=blue] (6,0) circle (2pt);
\draw [fill=blue] (3,0) circle (2pt);
\draw [fill=blue] (4,0) circle (2pt);
\draw [fill=blue] (5,0) circle (2pt);
\draw [fill=blue] (3,2) circle (2pt);
\draw [fill=blue] (4,2) circle (2pt);
\draw [fill=blue] (5,2) circle (2pt);
\draw [fill=blue] (5,1) circle (2pt);
\draw [fill=blue] (4,1) circle (2pt);
\draw [fill=blue] (3,1) circle (2pt);

\draw[below] (2,3) node {$(x,m)$};
\draw[below] (4,0) node {$(y,m-3)$};

\end{scope}

\end{tikzpicture}
    \caption{The red path between $(x,m)$ and $(y,m-3)$ on the left is longer than the one on the right.}
    \label{fig:descend_descend_descend}
\end{figure}
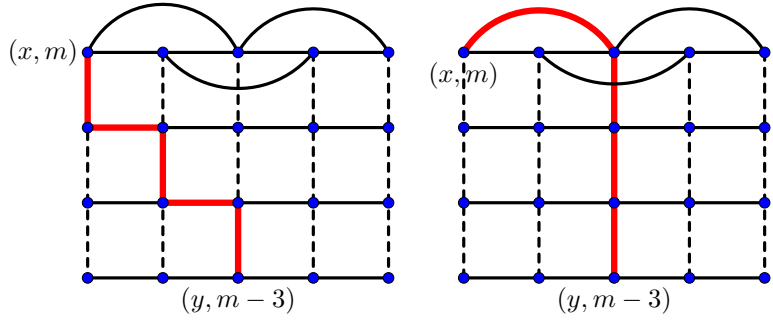

  Let $\gamma$ be a geodesic between the points $v_1$ and $v_2$ in the
  statement. By the above observations, if $\gamma$ contains a horizontal segment of length
  at least two at some $\Lambda\times \{m\}$, then $\im(\gamma)$ is disjoint
  from $\Lambda\times \{m'\}$ for all $m < m' \leq n$. 
Thus, any horizontal segment in $\gamma$ is either of length one, or is contained in the maximum level $\Lambda\times \{\max\}$ that intersects $\im(\gamma)$ nontrivially.

In fact, it can be verified
  that apart from the horizontal segment at $\Lambda\times \{\max\}$, the
  image of $\gamma$ can have at most one more horizontal edge.

Another consequence of the above observations is that if $\gamma$ contains a horizontal segment of length at least $6$, then this segment has to be contained in $\Lambda\times \{n\}$, see \cref{fig:longhorizontal}. 

\begin{figure}
    \centering
    \begin{tikzpicture}[line cap=round,line join=round,>=triangle 45,x=1cm,y=1cm]

\draw [line width=1.2pt] (-4,3)-- (2,3);
\draw [line width=1.2pt] (-4,2)-- (2,2);
\draw [line width=1.2pt,dashed] (-4,3)-- (-4,2);
\draw [line width=1.2pt,dashed] (-3,3)-- (-3,2);
\draw [line width=1.2pt,dashed] (-2,2)-- (-2,3);
\draw [line width=1.2pt,dashed] (-1,3)-- (-1,2);
\draw [line width=1.2pt,dashed] (0,2)-- (0,3);
\draw [line width=1.2pt,dashed] (1,3)-- (1,2);
\draw [line width=1.2pt,dashed] (2,2)-- (2,3);
\draw [shift={(-3,2.344444444444445)},line width=1.2pt]  plot[domain=0.580270798859531:2.561321854730262,variable=\t]({1*1.1957228300989122*cos(\t r)+0*1.1957228300989122*sin(\t r)},{0*1.1957228300989122*cos(\t r)+1*1.1957228300989122*sin(\t r)});
\draw [shift={(-1,2.5736363636363637)},line width=1.2pt]  plot[domain=0.4030250925051079:2.738567561084685,variable=\t]({1*1.0870997886179645*cos(\t r)+0*1.0870997886179645*sin(\t r)},{0*1.0870997886179645*cos(\t r)+1*1.0870997886179645*sin(\t r)});
\draw [shift={(1,2.5035483870967745)},line width=1.2pt]  plot[domain=0.4608048721177228:2.6807877814720706,variable=\t]({1*1.1164516129032256*cos(\t r)+0*1.1164516129032256*sin(\t r)},{0*1.1164516129032256*cos(\t r)+1*1.1164516129032256*sin(\t r)});
\draw [shift={(-2,3.9804761904761907)},line width=1.2pt]  plot[domain=3.917132997340431:5.507644963428948,variable=\t]({1*1.4004761904761907*cos(\t r)+0*1.4004761904761907*sin(\t r)},{0*1.4004761904761907*cos(\t r)+1*1.4004761904761907*sin(\t r)});
\draw [shift={(0,3.9804761904761907)},line width=1.2pt]  plot[domain=3.917132997340431:5.507644963428948,variable=\t]({1*1.4004761904761907*cos(\t r)+0*1.4004761904761907*sin(\t r)},{0*1.4004761904761907*cos(\t r)+1*1.4004761904761907*sin(\t r)});
\draw [line width=2.4pt,color=red] (-4,2)-- (2,2);

\draw [fill=blue] (-4,3) circle (2pt);
\draw [fill=blue] (2,3) circle (2pt);
\draw [fill=blue] (-4,2) circle (2pt);
\draw [fill=blue] (2,2) circle (2pt);
\draw [fill=blue] (-3,3) circle (2pt);
\draw [fill=blue] (-3,2) circle (2pt);
\draw [fill=blue] (-2,2) circle (2pt);
\draw [fill=blue] (-2,3) circle (2pt);
\draw [fill=blue] (-1,3) circle (2pt);
\draw [fill=blue] (-1,2) circle (2pt);
\draw [fill=blue] (0,2) circle (2pt);
\draw [fill=blue] (0,3) circle (2pt);
\draw [fill=blue] (1,3) circle (2pt);
\draw [fill=blue] (1,2) circle (2pt);

\draw[left] (-4,2) node {$(x,m)$};
\draw[right] (2,2) node {$(y,m)$};

\begin{scope}[yshift=1.5cm]
\draw [line width=1.2pt] (-4,-1)-- (2,-1);
\draw [line width=1.2pt] (-4,-2)-- (2,-2);
\draw [line width=1.2pt,dashed] (-4,-2)-- (-4,-1);
\draw [line width=1.2pt,dashed] (-3,-1)-- (-3,-2);
\draw [line width=1.2pt,dashed] (-2,-2)-- (-2,-1);
\draw [line width=1.2pt,dashed] (-1,-1)-- (-1,-2);
\draw [line width=1.2pt,dashed] (0,-2)-- (0,-1);
\draw [line width=1.2pt,dashed] (1,-1)-- (1,-2);
\draw [line width=1.2pt,dashed] (2,-2)-- (2,-1);
\draw [shift={(-2,-0.08363636363636352)},line width=1.2pt]  plot[domain=3.8833752312151177:5.541402729554261,variable=\t]({1*1.3563636363636364*cos(\t r)+0*1.3563636363636364*sin(\t r)},{0*1.3563636363636364*cos(\t r)+1*1.3563636363636364*sin(\t r)});
\draw [shift={(0,-0.019523809523809228)},line width=1.2pt]  plot[domain=3.917132997340431:5.507644963428948,variable=\t]({1*1.4004761904761907*cos(\t r)+0*1.4004761904761907*sin(\t r)},{0*1.4004761904761907*cos(\t r)+1*1.4004761904761907*sin(\t r)});
\draw [shift={(-3,-1.655925925925926)},line width=2.4pt,color=red]  plot[domain=0.5805297998580884:2.5610628537317046,variable=\t]({1*1.195925925925926*cos(\t r)+0*1.195925925925926*sin(\t r)},{0*1.195925925925926*cos(\t r)+1*1.195925925925926*sin(\t r)});
\draw [shift={(-1,-1.6544444444444444)},line width=2.4pt,color=red]  plot[domain=0.5794932679365729:2.5620993856532204,variable=\t]({1*1.195114024210325*cos(\t r)+0*1.195114024210325*sin(\t r)},{0*1.195114024210325*cos(\t r)+1*1.195114024210325*sin(\t r)});
\draw [shift={(1,-1.6128571428571428)},line width=2.4pt,color=red]  plot[domain=0.549819682961345:2.5917729706284485,variable=\t]({1*1.1728571428571428*cos(\t r)+0*1.1728571428571428*sin(\t r)},{0*1.1728571428571428*cos(\t r)+1*1.1728571428571428*sin(\t r)});
\draw [line width=2.4pt,color=red] (-4,-1)-- (-4,-2);
\draw [line width=2.4pt,color=red] (2,-1)-- (2,-2);
\draw [fill=blue] (-4,-1) circle (2pt);
\draw [fill=blue] (2,-1) circle (2pt);
\draw [fill=blue] (-4,-2) circle (2pt);
\draw [fill=blue] (2,-2) circle (2pt);
\draw [fill=blue] (-3,-1) circle (2pt);
\draw [fill=blue] (-3,-2) circle (2pt);
\draw [fill=blue] (-2,-2) circle (2pt);
\draw [fill=blue] (-2,-1) circle (2pt);
\draw [fill=blue] (-1,-1) circle (2pt);
\draw [fill=blue] (-1,-2) circle (2pt);
\draw [fill=blue] (0,-2) circle (2pt);
\draw [fill=blue] (0,-1) circle (2pt);
\draw [fill=blue] (1,-1) circle (2pt);
\draw [fill=blue] (1,-2) circle (2pt);

\draw[left] (-4,-2) node {$(x,m)$};
\draw[right] (2,-2) node {$(y,m)$};

\end{scope}

\end{tikzpicture}
    \caption{If $m <n$, then the red horizontal path between $(x,m)$ and $(y,m)$ in the top panel is longer than the red path in the bottom panel.}
    \label{fig:longhorizontal}
\end{figure}
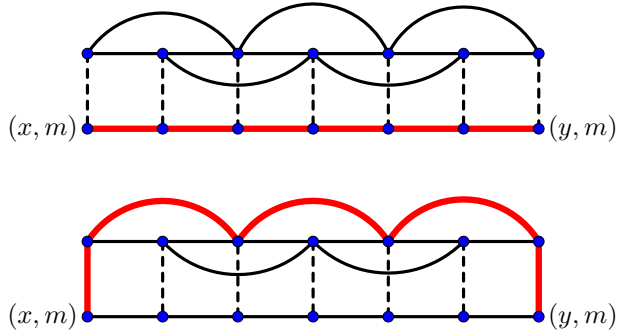

Assume that the horizontal edge not at $\Lambda \times \{\max\}$ is an edge between $(x,m)$ and $(y,m)$ and is followed by an ascending segment from $(y,m)$ to $(y,\max) \in \Lambda\times \{\max\}$. Let $\gamma'$ be the geodesic obtained from $\gamma$ by replacing the above by a vertical segment from $(x,m)$ to $(x,\max)$ followed by a horizontal edge to $(y,\max)$. 
If $\max < n$ and the only horizontal
segment of $\gamma'$ contains $4$ or $5$ edges, then let $\beta$ be
the geodesic obtained by replacing this horizontal segment by an ascending edge, a horizontal segment in $\Lambda\times \{\max+1\}$ and a descending edge back to $\Lambda\times \{\max\}$, similar to the procedure in \cref{fig:longhorizontal}. We leave it as an exercise to verify that $\beta$ is
as required.
\end{proof}

Before stating the main result of this section, we recall a convexity result from \cite{groves_manning_horoball} which will be used in the proof.

\begin{lem}[Lemma 3.26, \cite{groves_manning_horoball}] \label{lem:deephoroball_convexity}
 Let $\Gamma$ be a graph that is hyperbolic
  relative to a family $(\Lambda_{\alpha})_{\alpha \in \mathscr{A}}$ of subgraphs. Let $\delta$ be the hyperbolicity constant of $\mathcal{H}\bigl(\Gamma, (\Lambda_{\alpha})_{\alpha\in \mathscr{A}}\bigr)$. Then for any $k > \delta$ and any $\alpha \in \mathscr{A}$, $\mathcal{H}\bigl(\Lambda_{\alpha}\bigr) \setminus \mathcal{H}_k\bigl(\Lambda_{\alpha}\bigr)$ is convex in $\mathcal{H}\bigl(\Gamma, (\Lambda_{\alpha})_{\alpha\in \mathscr{A}}\bigr)$.
\end{lem}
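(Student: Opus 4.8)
The plan is as follows. Fix $\alpha \in \mathscr{A}$ and abbreviate $\mathcal{H} = \mathcal{H}(\Lambda_\alpha)$, regarded as a subgraph of $X = \mathcal{H}\bigl(\Gamma, (\Lambda_\beta)_{\beta \in \mathscr{A}}\bigr)$, whose hyperbolicity constant is $\delta$; write $D = \mathcal{H} \setminus \mathcal{H}_k(\Lambda_\alpha)$ for the part of $\mathcal{H}$ at level greater than $k$, and let $\ell \colon X^{(0)} \to \mathbb{N}_0$ be the level function, equal to $0$ on $\Gamma^{(0)}$ and recording the level inside each horoball. Distinct horoballs share only vertices of $\Gamma$, so a path of $X$ that leaves $\mathcal{H}$ must pass through a vertex $a$ with $\ell(a) = 0$, and $\ell$ is $1$-Lipschitz along edges within a single horoball. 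Given $p, q \in D$ and a geodesic $\gamma$ of $X$ between them, the plan is to show $\gamma \subseteq D$ by first handling the case $\gamma \subseteq \mathcal{H}$ and then ruling out that $\gamma$ leaves $\mathcal{H}$.

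If $\gamma \subseteq \mathcal{H}$, then, being a geodesic of $X$, it is a geodesic of $\mathcal{H}$, and it lies in a restricted horoball $\mathcal{H}_m(\Lambda_\alpha)$ for a suitable finite $m$ (a geodesic of a combinatorial horoball never climbs above a bounded level, so also never descends to $\Gamma$ when its endpoints are deep). Then \cref{lem:georestriction}(3) shows $\gamma$ stays at level $\geq \min(\ell(p), \ell(q)) > k$, i.e.\ $\gamma \subseteq D$. Suppose instead that $\gamma$ leaves $\mathcal{H}$, and let $\beta$ be the ``staircase'' path of $\mathcal{H}$ from $p$ to $q$ — an ascending vertical segment, a horizontal segment of length at most two at the common deepest level, and a descending vertical segment — which exists by the analysis behind \cref{lem:georestriction} and which stays at level $\geq \min(\ell(p), \ell(q)) > k$. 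The crux is that $\beta$ is a $(1, C)$-quasi-geodesic of $X$ for some $C = C(\delta)$: descending all the way to level $0$ and back forces a detour whose length is bounded below in terms of $|\beta|$ — using $\delta$-thinness of triangles in $X$ together with the combinatorial geometry of $\mathcal{H}$ — so a path of $X$ through level $0$ can improve on $\beta$ only when $|\beta| \leq C$ already. Granting this, the Morse Lemma (\cref{thm:morslem}) places $\im(\gamma)$ within Hausdorff distance $M = M(\delta, 1, C)$ of $\im(\beta)$; since $\ell$ is $1$-Lipschitz, is greater than $k$ everywhere on $\beta$, and vanishes at the level-$0$ vertex of $\gamma$, this forces $k < M$. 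Hence once $k \geq M$ no such escape occurs, $\gamma \subseteq \mathcal{H}$, and the previous case applies. To recover the sharp threshold $k > \delta$ of the statement, one replaces the lossy use of the Morse Lemma by a direct inspection of the rigid staircase shape of the maximal subpaths of $\gamma$ lying in $\mathcal{H}$ — recognising these as quasi-geodesics via \cref{lem:locgeo} — and carries the constants through the thin-triangle estimate by hand.

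I expect the main obstacle to be exactly this last step: proving that a geodesic of $X$ between two deep vertices of a horoball cannot reach level $0$, equivalently that the staircase path is a quasi-geodesic of the whole cusped space, and in particular doing so with the optimal constant $k > \delta$ rather than the weaker constant produced by a black-box appeal to the Morse Lemma.
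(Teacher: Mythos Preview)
The paper does not prove this lemma; it is quoted verbatim from Groves and Manning \cite[Lemma~3.26]{groves_manning_horoball} and used as a black box. So there is no ``paper's own proof'' to compare against, only the original one.

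That said, your proposal has a genuine gap, and it is precisely the one you flag at the end. The claim that the staircase path $\beta$ is a $(1,C)$-quasi-geodesic of $X$ with $C = C(\delta)$ is essentially equivalent to what you are trying to prove: knowing that deep horoball geodesics are undistorted in $X$ is the same as knowing that $X$-geodesics between deep points do not exit the horoball. Your sketch of why this should hold (``descending all the way to level $0$ and back forces a detour\dots using $\delta$-thinness'') is not an argument, and routing the conclusion through \cref{thm:morslem} both loses the sharp constant and still presupposes the quasi-geodesic estimate you have not established.

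The clean idea you are missing --- and the one Groves and Manning actually use --- is to look \emph{upward} rather than downward. The vertical rays $\rho_p$, $\rho_q$ from $p$ and $q$ toward the cusp point are genuine geodesic rays in $X$ (the level function is $1$-Lipschitz, so a path from level $m$ to level $m+j$ has length at least $j$), and they are asymptotic. Together with any $X$-geodesic $\gamma$ from $p$ to $q$ they bound a $\delta$-thin ideal triangle, so every point of $\gamma$ lies within $\delta$ of $\rho_p \cup \rho_q$, hence at level greater than $k - \delta > 0$ and in the same horoball. Now your Case~1 argument via \cref{lem:georestriction} applies and gives the sharp threshold $k > \delta$ immediately, with no Morse Lemma and no quasi-geodesic estimate to justify.
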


\begin{thrm}
  \label{thm:convexify} Let $\Gamma$ be a graph that is hyperbolic
  relative to a family $(\Lambda_{\alpha})_{\alpha \in \mathscr{A}}$
  of subgraphs.  Then, for $n$ large enough, the parabolics (i.e. the
  top levels) of the restricted horoballs
  $\mathcal{H}_n\bigl(\Gamma, (\Lambda_{\alpha})_{\alpha\in
    \mathscr{A}}\bigr)$ are convex subgraphs.
\end{thrm}

\begin{proof}
Let $\delta$ be the hyperbolicity constant of $\mathcal{H}\bigl(\Gamma, (\Lambda_{\alpha})_{\alpha \in \mathscr{A}}\bigr)$. Let $r = \lceil 8\delta+2 \rceil$ and $n \ge 2r + M(\delta, 2\delta, K)$, where $K$ is the constant from \cref{lem:locgeo} and $M$ is the constant from \cref{thm:morslem}. Fix $\alpha_0 \in \mathscr{A}$ and points $x,y \in \Lambda_{\alpha_0}\times\{n\}$. Let $\gamma : P \to \mathcal{H}_n \bigl(\Gamma, (\Lambda_{\alpha})_{\alpha \in \mathscr{A}}\bigr)$ be a geodesic (in $\mathcal{H}_n \bigl(\Gamma, (\Lambda_{\alpha})_{\alpha \in \mathscr{A}}\bigr)$) between $x$ and $y$. Since each $n$-restricted horoball in $\mathcal{H}_n \bigl(\Gamma, (\Lambda_{\alpha})_{\alpha \in \mathscr{A}}\bigr)$ is a full subgraph, every subpath of $\gamma$ whose image lies in an $n$-restricted horoball is a geodesic in that horoball. We will therefore assume that each such geodesic subpath of $\gamma$ is of the form given by \cref{lem:georestriction}.

Denote by $U \subset \mathcal{H}_n\bigl(\Gamma, (\Lambda_{\alpha})_{\alpha \in \mathscr{A}}\bigr)$ the set $\bigcup_{\alpha \in \mathscr{A}} N_r \bigl(\Lambda_{\alpha}\times \{n\}\bigr)$. The path $\gamma$ is a concatenation $\gamma_1 \cdot \beta_1\cdot \gamma_2\cdots \gamma_k$, where each $\gamma_i$ is a path with image in $U$ and each $\beta_i$ is such that its image is disjoint from $U$, except at the endpoints. See \cref{fig:gamma_concatenation} for an illustration.
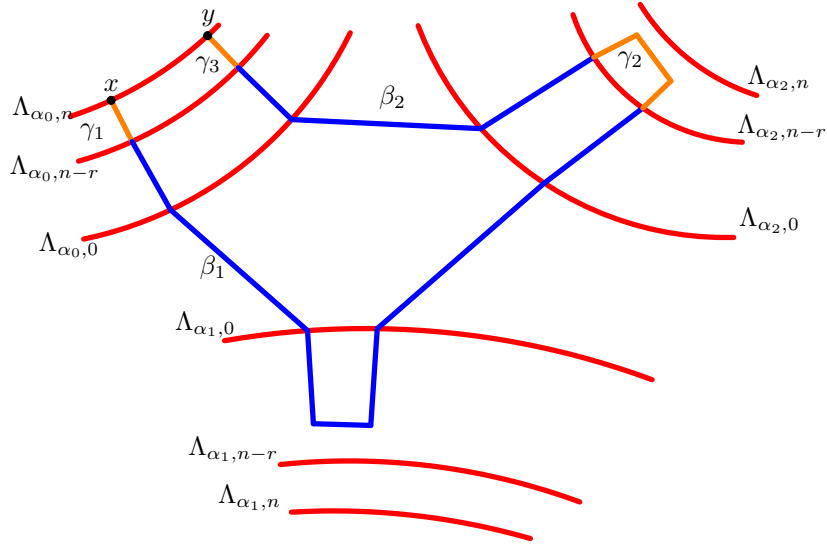
\begin{figure}
    \centering
    \begin{tikzpicture}[scale=.3,line cap=round,line join=round,>=triangle 45,x=1cm,y=1cm]

\draw [shift={(-18.45386621876636,21.396351664934475)},line width=2pt,color=red]  plot[domain=5.0287545651388985:5.500775668129845,variable=\t]({1*16.51258029884188*cos(\t r)+0*16.51258029884188*sin(\t r)},{0*16.51258029884188*cos(\t r)+1*16.51258029884188*sin(\t r)});
\draw [shift={(-17.332594835870733,19.334735126023688)},line width=2pt,color=red]  plot[domain=4.9861224626648895:5.616224426538524,variable=\t]({1*16.2187787822777*cos(\t r)+0*16.2187787822777*sin(\t r)},{0*16.2187787822777*cos(\t r)+1*16.2187787822777*sin(\t r)});
\draw [shift={(-16.167525878940722,16.915023832337)},line width=2pt,color=red]  plot[domain=4.916449599755689:5.8264240156174685,variable=\t]({1*17.004893851546004*cos(\t r)+0*17.004893851546004*sin(\t r)},{0*17.004893851546004*cos(\t r)+1*17.004893851546004*sin(\t r)});
\draw [shift={(-0.22161173602218817,-40.33080191208166)},line width=2pt,color=red]  plot[domain=1.2170161960980224:1.7427175995453863,variable=\t]({1*36.62848574858687*cos(\t r)+0*36.62848574858687*sin(\t r)},{0*36.62848574858687*cos(\t r)+1*36.62848574858687*sin(\t r)});
\draw [shift={(-1.6406139640687234,-43.39978841231607)},line width=2pt,color=red]  plot[domain=1.2914643682176212:1.630376342706491,variable=\t]({1*31.610137335569906*cos(\t r)+0*31.610137335569906*sin(\t r)},{0*31.610137335569906*cos(\t r)+1*31.610137335569906*sin(\t r)});
\draw [shift={(-0.9394409606742639,-39.0750620039208)},line width=2pt,color=red]  plot[domain=1.2177112390093088:1.6751714254869638,variable=\t]({1*29.50184578065782*cos(\t r)+0*29.50184578065782*sin(\t r)},{0*29.50184578065782*cos(\t r)+1*29.50184578065782*sin(\t r)});
\draw [shift={(15.723645079667886,14.91142804715737)},line width=2pt,color=red]  plot[domain=3.5051625730516993:4.737402875910497,variable=\t]({1*14.580188905473708*cos(\t r)+0*14.580188905473708*sin(\t r)},{0*14.580188905473708*cos(\t r)+1*14.580188905473708*sin(\t r)});
\draw [shift={(20.01907474396503,15.688462674180977)},line width=2pt,color=red]  plot[domain=3.69747728528241:4.4011780163230725,variable=\t]({1*9.51136003695752*cos(\t r)+0*9.51136003695752*sin(\t r)},{0*9.51136003695752*cos(\t r)+1*9.51136003695752*sin(\t r)});
\draw [shift={(16.860055582834992,12.890117941747803)},line width=2pt,color=red]  plot[domain=3.467905230591887:4.66505511258615,variable=\t]({1*8.33971916107538*cos(\t r)+0*8.33971916107538*sin(\t r)},{0*8.33971916107538*cos(\t r)+1*8.33971916107538*sin(\t r)});
\draw [line width=2pt,color=orange] (-11.504996923245754,6.41708186832984)-- (-10.575328228177213,4.590647050425002);
\draw [line width=2pt,color=blue] (-10.575328228177213,4.590647050425002)-- (-8.86859603710971,1.55624660020111);
\draw [line width=2pt,color=blue] (-8.86859603710971,1.55624660020111)-- (-2.8023117948805947,-3.7933422926631337);
\draw [line width=2pt,color=blue] (-2.8023117948805947,-3.7933422926631337)-- (-2.5422750726703884,-7.923509615478263);
\draw [line width=2pt,color=blue] (-2.5422750726703884,-7.923509615478263)-- (0,-8);
\draw [line width=2pt,color=blue] (0,-8)-- (0.287216371459457,-3.7058505511382833);
\draw [line width=2pt,color=blue] (0.287216371459457,-3.7058505511382833)-- (7.687732285975587,2.7456558114350926);
\draw [line width=2pt,color=orange] (13.297497328232488,7.275129140626043)-- (11.788947575765548,9.311671306456397);
\draw [line width=2pt,color=blue] (4.882191337236563,5.162336332027163)-- (-3.5044493818078433,5.5654463719154865);
\draw [line width=2pt,color=blue] (7.687732285975587,2.7456558114350926)-- (12.053028177723974,6.075178234595622);
\draw [line width=2pt,color=orange] (12.053028177723974,6.075178234595622)-- (13.297497328232488,7.275129140626043);
\draw [line width=2pt,color=blue] (4.882191337236563,5.162336332027163)-- (9.880043024690925,8.326091169942126);
\draw [line width=2pt,color=orange] (9.880043024690925,8.326091169942126)-- (11.788947575765548,9.311671306456397);
\draw [line width=2pt,color=orange] (-7.227881593929354,9.286744845197342)-- (-5.865269295274613,7.865243849840024);
\draw [line width=2pt,color=blue] (-5.865269295274613,7.865243849840024)-- (-3.5044493818078433,5.5654463719154865);

\draw [fill=black] (-11.504996923245754,6.41708186832984) circle (5pt);
\draw[above] (-11.5,6.4) node {$x$};
\draw[below left] (-11.329577380790317,5.785436260064951) node {$\gamma_1$};
\draw [fill=black] (-7.227881593929354,9.286744845197342) circle (5pt);
\draw[above] (-7.2,9.3) node {$y$};
\draw[left] (-5.974225759532677,-1.0030376260362281) node {$\beta_1$};
\draw[left] (12.5,8.2) node {$\gamma_2$};
\draw (0.9651031018152485,6.539711136298415) node {$\beta_2$};
\draw (-7.2,8) node {$\gamma_3$};
\draw (-14.5,6) node {$\Lambda_{\alpha_0,n}$};
\draw[below] (-14.007253191419137,4.352313995221368) node {$\Lambda_{\alpha_0,n-r}$};
\draw[below] (-13.366119546620688,0.958077052170779) node {$\Lambda_{\alpha_0,0}$};
\draw (-7.331920536752924,-3.5298584614183337) node {$\Lambda_{\alpha_1,0}$};
\draw (-6.087366990967698,-9.073778801734296) node {$\Lambda_{\alpha_1,n-r}$};
\draw (-5.370805858545902,-11.22346219899967) node {$\Lambda_{\alpha_1,n}$};
\draw (17.672291610386615,1.0712182836057986) node {$\Lambda_{\alpha_2,0}$};
\draw (18.23799776756172,5.182016359078179) node {$\Lambda_{\alpha_2,n-r}$};
\draw (18.087142792315024,7.3694135001552254) node {$\Lambda_{\alpha_2,n}$};

\end{tikzpicture}
    \caption{The path $\gamma$ is a concatenation of paths $\gamma_i$ (in orange) and $\beta_j$ (in blue).}
    \label{fig:gamma_concatenation}
\end{figure}

Note that by \cref{lem:georestriction}, each $\beta_i$ is a path which satisfies the following:
\begin{itemize}
    \item $\im(\beta_i)$ is not contained in any single $n$-restricted horoball and thus has length at least $2(n-r) > 2r$, and 
    \item for any $\alpha \in \mathscr{A}$, $\im(\beta_i) \cap \mathcal{H}_n\bigl(\Lambda_{\alpha}\bigr)$ is a union of components, where each component is either a vertical segment between $\Lambda_{\alpha}\times \{0\}$ and $\Lambda_{\alpha}\times \{n-r\}$ (e.g., $\im(\beta_2) \cap \mathcal{H}_n\bigl(\Lambda_{\alpha_2}\bigr)$ in \cref{fig:gamma_concatenation}), or the image of a geodesic between points of $\Lambda_{\alpha}\times \{0\}$ (e.g., $\im(\beta_1) \cap \mathcal{H}_n\bigl(\Lambda_{\alpha_1}\bigr)$ in \cref{fig:gamma_concatenation}). In the latter case, we note that this component is disjoint from the image of any $\gamma_j$.
\end{itemize}

Let $\iota : \mathcal{H}_n\bigl(\Gamma, (\Lambda_{\alpha})_{\alpha \in \mathscr{A}}\bigr) \hookrightarrow \mathcal{H}\bigl(\Gamma, (\Lambda_{\alpha})_{\alpha \in \mathscr{A}}\bigr)$ denote the inclusion map.
For each $i$, let $\gamma'_i$ be a geodesic path in $\mathcal{H}\bigl(\Gamma, (\Lambda_{\alpha})_{\alpha \in \mathscr{A}}\bigr)$ between the endpoints of $\iota \circ \gamma_i$. Let $\gamma' : P' \to \mathcal{H}\bigl(\Gamma, (\Lambda_{\alpha})_{\alpha \in \mathscr{A}}\bigr)$ be the path obtained from $\iota \circ \gamma$ by replacing each $\iota \circ \gamma_i$ by $\gamma'_i$. We will denote $\iota \circ \beta_i$ by $\beta'_i$. Thus $\gamma' = \gamma'_1 \cdot \beta'_1 \cdot \gamma'_2 \cdots \gamma'_k$.

\begin{claim*}
The path $\gamma'$ is an $r$-local geodesic in $\mathcal{H}\bigl(\Gamma, (\Lambda_{\alpha})_{\alpha \in \mathscr{A}}\bigr)$.
\end{claim*}
\begin{subproof}[Proof of claim.] Each $\gamma'_i$ is a geodesic, and therefore a local geodesic. Each $\beta'_i$ is an $r$-local geodesic since the $r$-ball around any point in $\im(\beta_i)$ is contained in $\mathcal{H}_n\bigl(\Gamma, (\Lambda_{\alpha})_{\alpha \in \mathscr{A}}\bigr)$. 

As observed above, the image of every subpath of $\beta'_i$ that lies in a horoball is either a vertical segment or it does not meet any $\gamma'_j$.
This implies that any subpath of $\beta'_{i-1}\cdot\gamma'_i\cdot\beta'_i$ whose image lies in $\mathcal{H}\bigl(\Lambda_{\alpha}\bigr) \setminus \mathcal{H}_r\bigl(\Lambda_{\alpha}\bigr)$ is a geodesic in $\mathcal{H}\bigl(\Lambda_{\alpha}\bigr) \setminus \mathcal{H}_r\bigl(\Lambda_{\alpha}\bigr)$. Since $\mathcal{H}\bigl(\Lambda_{\alpha}\bigr) \setminus \mathcal{H}_r\bigl(\Lambda_{\alpha}\bigr)$ is convex (by \cref{lem:deephoroball_convexity}), each such subpath is in fact a geodesic in $\mathcal{H}\bigl(\Gamma, (\Lambda_{\alpha})_{\alpha \in \mathscr{A}}\bigr)$, and therefore an $r$-local geodesic. This proves the claim.
\end{subproof}
Thus by \cref{lem:locgeo}, $\gamma'$ is a $(2\delta,K)$-quasi-geodesic and by \cref{thm:morslem}, it lies in an $M = M(\delta, 2\delta, K)$ neighborhood of any geodesic in $\mathcal{H}\bigl(\Gamma, (\Lambda_{\alpha})_{\alpha \in \mathscr{A}}\bigr)$ between $x$ and $y$. Since $x,y \in \Lambda_{\alpha_0}\times \{n\}$ with $n - 1 > \delta$, we have that any geodesic between them in $\mathcal{H}\bigl(\Gamma, (\Lambda_{\alpha})_{\alpha \in \mathscr{A}}\bigr)$ lies in $\mathcal{H}\bigl(\Lambda_{\alpha_0}\bigr)\setminus \mathcal{H}_{n-1}\bigl(\Lambda_{\alpha_0}\bigr)$ (again, by \cref{lem:deephoroball_convexity}).
This implies that $\gamma'$ lies in $N_M(\mathcal{H}\bigl(\Lambda_{\alpha_0}\bigr) \setminus \mathcal{H}_{n-1}\bigl(\Lambda_{\alpha_0}\bigr)) \subset \mathcal{H}\bigl(\Lambda_{\alpha_0}\bigr) \setminus \mathcal{H}_{2r}\bigl(\Lambda_{\alpha_0}\bigr)$.

We are thus forced to conclude that $\gamma' = \gamma'_1$ (and therefore $\gamma = \gamma_1$). Indeed, if not, then $\beta_1$ is a geodesic in $\mathcal{H}_n\bigl(\Gamma, (\Lambda_{\alpha})_{\alpha \in \mathscr{A}}\bigr)$ with endpoints on $\Lambda_{\alpha_0}\times\{n-r\}$ and such that $\im(\beta_1) \subset \mathcal{H}_n\bigl(\Lambda_{\alpha_0}\bigr)$. But as observed above, $\im(\beta_1)$ is not contained in any single $n$-restricted horoball, which is a contradiction.

Using \cref{lem:georestriction} once again, 
we conclude that $\gamma \subset \Lambda_{\alpha_0}\times \{n\}$.
\end{proof}

\begin{cor} \label{cor:rough_isometry_of_horopheres}
Let $\Gamma$ be a graph that is hyperbolic
  relative to a family $(\Lambda_{\alpha})_{\alpha \in \mathscr{A}}$ of subgraphs. Let $n$ be such that the parabolics $(\Lambda_{\alpha}\times \{n\})_{\alpha}$ of
  $\mathcal{H}_n\bigl(\Gamma, (\Lambda_{\alpha})_{\alpha\in \mathscr{A}}\bigr)$ are
  convex subgraphs, as in \cref{thm:convexify}. Then for each $\alpha \in \mathscr{A}$, the subspace $\Lambda_{\alpha}^{(0)}\times \{0\}$ is roughly isometric to the subgraph $\Lambda_{\alpha}\times \{n\}$.
\end{cor}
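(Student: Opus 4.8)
The plan is to exhibit an explicit rough isometry, namely the ``vertical'' map
\[
\phi\colon \Lambda^0_{\alpha,0}\longrightarrow \Lambda_{\alpha,n},\qquad (v,0)\longmapsto (v,n),
\]
where $\Lambda^0_{\alpha,0}$ carries the metric induced from $\mathcal{H}_n\bigl(\Gamma,(\Lambda_\alpha)_\alpha\bigr)$ and $\Lambda_{\alpha,n}$ carries its intrinsic graph metric, which by hypothesis (convexity, via \cref{thm:convexify}) coincides with the metric induced from $\mathcal{H}_n\bigl(\Gamma,(\Lambda_\alpha)_\alpha\bigr)$. Since $\phi$ restricts to a bijection between the vertex sets, its image is $1$-dense in $\Lambda_{\alpha,n}$, so the whole content of the corollary is a two-sided additive comparison of distances under $\phi$.

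First I would record the one geometric input needed: for every vertex $v$ of $\Lambda_\alpha$, the column $(v,0),(v,1),\dots,(v,n)$ is a path of length $n$ lying inside the restricted horoball $\mathcal{H}_n(\Lambda_\alpha)\subseteq\mathcal{H}_n\bigl(\Gamma,(\Lambda_\alpha)_\alpha\bigr)$, so that $d_{\mathcal{H}_n(\Gamma,(\Lambda_\alpha)_\alpha)}\bigl((v,0),(v,n)\bigr)\le n$. Then, for vertices $v,w$ of $\Lambda_\alpha$, I would apply the triangle inequality in both directions through the four points $(v,0),(w,0),(v,n),(w,n)$. Descending from level $n$ gives, since $\Lambda_{\alpha,n}$ is a subgraph,
\[
d\bigl((v,0),(w,0)\bigr)\le d\bigl((v,0),(v,n)\bigr)+d\bigl((v,n),(w,n)\bigr)+d\bigl((w,n),(w,0)\bigr)\le d_{\Lambda_{\alpha,n}}\bigl((v,n),(w,n)\bigr)+2n ,
\]
while ascending, now using convexity of $\Lambda_{\alpha,n}$ to identify the distance $d\bigl((v,n),(w,n)\bigr)$ in $\mathcal{H}_n\bigl(\Gamma,(\Lambda_\alpha)_\alpha\bigr)$ with the intrinsic distance $d_{\Lambda_{\alpha,n}}\bigl((v,n),(w,n)\bigr)$, gives
\[
d_{\Lambda_{\alpha,n}}\bigl((v,n),(w,n)\bigr)=d\bigl((v,n),(w,n)\bigr)\le d\bigl((v,n),(v,0)\bigr)+d\bigl((v,0),(w,0)\bigr)+d\bigl((w,0),(w,n)\bigr)\le d\bigl((v,0),(w,0)\bigr)+2n .
\]
Combining the two inequalities yields $\bigl|\,d\bigl((v,0),(w,0)\bigr)-d_{\Lambda_{\alpha,n}}\bigl((v,n),(w,n)\bigr)\,\bigr|\le 2n$ for all $v,w$, so $\phi$ is a $2n$-rough isometric embedding, and since its image is (roughly) dense, $\phi$ is a rough isometry.

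I do not expect a genuine obstacle: the only non-formal ingredient is the identity $d_{\mathcal{H}_n(\Gamma,(\Lambda_\alpha)_\alpha)}\bigl((v,n),(w,n)\bigr)=d_{\Lambda_{\alpha,n}}\bigl((v,n),(w,n)\bigr)$, which is exactly the statement that the convex subgraph $\Lambda_{\alpha,n}$ of \cref{thm:convexify} is isometrically embedded, and this is where the ``$n$ large enough'' hypothesis is consumed; everything else --- the column bound $d((v,0),(v,n))\le n$ and the two triangle inequalities --- is elementary. Conceptually, the corollary simply says that truncating each combinatorial horoball at a fixed finite height $n$ (large enough for convexity) distorts distances within a parabolic by at most the additive constant $2n$, and that convexity is precisely what rules out any shortcut of the ambient graph closing the gap between the level-$0$ and level-$n$ copies of the parabolic.
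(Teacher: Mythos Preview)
Your proposal is correct and follows essentially the same argument as the paper: both use the triangle inequality through the vertical columns of length $n$ to obtain the bound $\bigl|d_{\mathcal{H}_n}\bigl((v,0),(w,0)\bigr)-d_{\mathcal{H}_n}\bigl((v,n),(w,n)\bigr)\bigr|\le 2n$, and both invoke convexity of $\Lambda_{\alpha,n}$ to identify the ambient and intrinsic metrics at the top level. The paper's version is terser---it states the $2n$ bound directly from the triangle inequality without writing out both directions---but the content is identical.
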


\begin{proof}
Let $(x,0), (y,0) \in \Lambda_{\alpha}\times \{0\}$ be vertices at the bottom
    level of the combinatorial horoball based on $\Lambda_{\alpha}$ in
    $\Gamma$ and let $(x,n), (y,n) \in \Lambda_{\alpha}\times \{n\}$ be the
    corresponding vertices at the $n$th level.  We have
    \[ \Bigl| d_{\mathcal{H}_n}\bigl((x,0),(y,0)\bigr) -
      d_{\mathcal{H}_n}\bigl((x,n),(y,n)\bigr) \Bigr| \le 2n \] by the
    triangle inequality.  It follows that the map
    $(\Lambda_{\alpha}^{(0)} \times {0}, d_{\mathcal{H}_n}) \to
    (\Lambda_{\alpha}^{(0)} \times \{n\}, d_{\mathcal{H}_n})$ given by
    $(x,0) \mapsto (x,n)$ is a rough isometry and
    $\Lambda_{\alpha}^{(0)} \times \{n\}$ is a convex subgraph of
    $\mathcal{H}_n$.
\end{proof}

We now recall and prove \cref{convexify_horoball}.
\convexify*

\begin{proof}
 Let $S$ be a finite generating set of $G$ containing each of the
  $S_i$.  Let $\Gamma$ be the Cayley graph of $G$ with respect to $S$.
  Then the Cayley graphs $\Gamma'_i = \Cay(H_i,S_i)$ are subgraphs of
  $\Gamma$ and $G$ is hyperbolic relative to the family
  $(g\Gamma'_i)_{g,i}$ of $G$-translates of these subgraphs.  By
  \cref{thm:convexify}, there is an $n$ for which the parabolics of
  $\mathcal{H}_n\bigl(\Gamma, (g\Gamma'_i)_{g,i}\bigr)$ are convex.
  For each $i$, let $\Gamma_i$ be the parabolic in the restricted
  horoball with base $\Gamma'_i$.  Then $\mathcal{H}_n$ and the
  $\Gamma_i$ satisfy all the required conditions.
\end{proof}

\section{A Cayley graph with strongly shortcut parabolics} \label{sec:cayley_with_ss_parabolics}

Let $G$ be a finitely generated group that is strongly shortcut relative to strongly shortcut subgroups $(H_i)_i$.  In this section we will show that there exists a generating set $S$ for $G$ such that the $H_i$ are strongly shortcut metric subspaces of the Cayley graph $\Cay(G,S)$.  In order to do this, we will first need to define what it means for a metric space to be strongly shortcut.  The following definition appears in 
earlier work of the first named author under the name \emph{nonapproximability of $n$-gons} \cite[Definition~3.2]{Hoda:shortcut_space}.

\begin{defn}
  Let $C_n$ denote the cycle graph of length $n$ (i.e., a circle
  subdivided into $n$ edges and $n$ vertices) and let $C_n^{(0)}$
  denote the vertex set of $C_n$.  A metric space $X$ is
  \defterm{strongly shortcut} if there exists a $K > 1$, an $n \in \N$
  and an $M > 0$ such that there is no $K$-bilipschitz embedding of
  $(C_n^{(0)}, \lambda d_{C_n})$ in $X$ with $\lambda \ge M$.
\end{defn}

\begin{thrm}[{\cite[Corollary~3.6]{Hoda:shortcut_space}}]
  \label{graph_metric_ss} A graph $\Gamma$ is strongly shortcut as a graph if and only if it is strongly shortcut as a metric space.
\end{thrm}

Our goal in this section is to prove the following.

\begin{thrm}
  \label{ss_parabolics_in_cayley} Let $G$ be a finitely generated group that is
  hyperbolic relative to a family of strongly shortcut groups
  $(H_i)_i$.  Then $G$ has a finite generating set $S$ for which the
  $H_i$ are strongly shortcut metric subspaces of $\Cay(G,S)$.
\end{thrm}

In order to prove \cref{ss_parabolics_in_cayley} we will rely on
\cref{thm:convexify} and the following refined version of the
Milnor-\v{S}varc Lemma.  This version of the Milnor-\v{S}varc Lemma
gives us arbitrary control on the multiplicative constant of the
quasi-isometry, up to scaling the metric on the Cayley graph.  This
arbitrary control on the multiplicative constant of the quasi-isometry
comes at the cost of having to choose larger and larger finite
generating sets and accepting larger and larger additive
quasi-isometry constants.

\begin{thrm}[{Fine Milnor-\v{S}varc Lemma \cite[Theorem~H]{Hoda:shortcut_space}}]
  \label{fine_ms}
  Let $(X,d)$ be a geodesic metric space.  Let $G$ be a group
  acting metrically properly and coboundedly on $X$ by isometries.
  Fix $x_0 \in X$.  For $t > 0$ let $S_t$ be the finite set defined by
  \[ S_t = \bigl\{ g \in G \sth d(x_0,gx_0) \le t \bigr\} \] and
  consider the word metric $d_{S_t}$ defined by $S_t$.  (For those $t$
  where $S_t$ does not generate $G$, we allow $d_{S_t}$ to take the
  value $\infty$).  Let $K_t$ be the infimum of all $K > 1$ for which
  \begin{align*}
    (G,td_{S_t}) &\to X \\
    g &\mapsto g \cdot x_0
  \end{align*}
  is a $(K, C_K)$-quasi-isometry for some $C_K \ge 0 $.  Then
  $K_t \to 1$ as $t \to \infty$.
\end{thrm}

\begin{lem}
  \label{approximating_parabolics}
  Let $G$ be a finitely generated group that is hyperbolic relative to
  finitely generated subgroups $(H_i)_i$.  For each $i$, let $S_i$ be
  a finite generating set for $H_i$.  Then, for any $L > 1$, there is
  a $t > 0$ and a finite generating set $S$ for $G$ such that each
  inclusion \[ (H_i, d_{S_i}) \hookrightarrow (G, td_S) \] is a
  quasi-isometric embedding with multiplicative constant $L$, where
  $d_S$ and the $d_{S_i}$ are the word metrics.
\end{lem}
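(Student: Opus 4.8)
The plan is to build the generating set $S$ for $G$ by combining the $S_i$ with a carefully chosen finite generating set coming from the graph $\Gamma$ furnished by \cref{convexify_horoball} (equivalently \cref{thm:convexify}). Fix some $n$ large enough that the parabolics $\Lambda_{\alpha,n}$ of $\mathcal{H}_n(\Gamma,(\Lambda_\alpha)_\alpha)$ are convex subgraphs, and let $\Gamma = \mathcal{H}_n(\Cay(G,S'),(g\Lambda_i)_{i,g})$ for some fixed finite generating set $S'$ of $G$ with each $S_i\subset S'$. By \cref{action_on_n_restricted_horoball}, $G$ acts metrically properly and cocompactly on $\Gamma$, and each $H_i$ stabilizes the convex subgraph $\Gamma_i := \Lambda_{i,n}$, acting properly and cocompactly on it. Because $\Gamma_i$ is a Rips graph $\Rips_{2^n}(\Cay(H_i,S_i))$ (\cref{levels_rips}), the orbit map $(H_i,d_{S_i})\to \Gamma_i$ is a quasi-isometry whose multiplicative constant can be taken arbitrarily close to $1$ once $n$ is large — indeed the $\Rips_{2^n}$ metric is $\tfrac{1}{2^n}$ times the $d_{S_i}$ metric up to bounded additive error. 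So by choosing $n$ first, we may assume the orbit map $(H_i, \tfrac{1}{2^n} d_{S_i}) \to (\Gamma_i, d_{\Gamma_i})$ has multiplicative constant as close to $1$ as we like.

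Next I would apply the Fine Milnor-\v{S}varc Lemma (\cref{fine_ms}) to the action of $G$ on $\Gamma$: choosing a basepoint $x_0$ at a vertex of $\Gamma$ lying on some $\Lambda_{i,n}$, there is a $t>0$ such that the orbit map $(G, t\, d_{S_t}) \to \Gamma$, $g\mapsto g x_0$, is a $(K_t, C_{K_t})$-quasi-isometry with $K_t$ as close to $1$ as desired. Set $S := S_t \cup \bigcup_i S_i$ (adding the $S_i$ only enlarges $S_t$, decreasing the word metric, so this does not hurt the upper quasi-isometry constant; and one must check it does not badly hurt the lower one — see below). The composition
\[
  (H_i, d_{S_i}) \hookrightarrow (G, t\, d_S) \to \Gamma
\]
then has image (roughly) inside $\Gamma_i$, since $H_i\cdot x_0$ stays within bounded distance of the convex subgraph $\Gamma_i$, and by convexity of $\Gamma_i$ the $\Gamma$-distance and $\Gamma_i$-distance between points of $H_i x_0$ agree up to bounded additive error. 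Composing the (near-isometric) identification $\Gamma_i \simeq (H_i, \tfrac{1}{2^n} d_{S_i})$ with the above, the multiplicative distortions multiply, and each factor can be pushed below $L$ by taking $n$ and then $t$ large; hence the inclusion $(H_i,d_{S_i})\hookrightarrow (G,t\,d_S)$ is a quasi-isometric embedding with multiplicative constant at most $L$.

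The main obstacle is bookkeeping the interaction between the two enlargements of the generating set. When we pass from $S_t$ to $S = S_t\cup\bigcup_i S_i$, the word metric $d_S$ can only shrink, so the upper bound on $t\,d_S(1,h)$ in terms of $d_{S_i}(1,h)$ is immediate; the delicate direction is the lower bound $t\,d_S(1,h) \gtrsim \tfrac{1}{L}d_{S_i}(1,h)$. For this one uses that the orbit map into $\Gamma$ is a quasi-isometry for the metric $t\,d_S$ as well (adding generators that already move $x_0$ a bounded amount keeps $K_t$ controlled, since every new generator $s\in S_i$ satisfies $d_\Gamma(x_0, s x_0)\le t$ for $t$ large enough — this is exactly why we choose $t$ after fixing the $S_i$), together with the convexity of $\Gamma_i$ in $\Gamma$, which upgrades a quasi-isometric estimate for $G\curvearrowright\Gamma$ to one for $H_i\curvearrowright\Gamma_i$ without introducing extra multiplicative loss. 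I would therefore order the choices as: first $n$ (to make the Rips graph near-isometric to $H_i$), then $t$ (large enough that $S_t\supseteq \bigcup_i S_i$ automatically, so $S = S_t$, and $K_t$ is small), and finally read off the composite multiplicative constant and check it is $<L$.
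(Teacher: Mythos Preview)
Your proposal is correct and follows essentially the same route as the paper: build $\mathcal{H}_n$ with convex parabolics via \cref{thm:convexify}, apply the Fine Milnor--\v{S}varc Lemma (\cref{fine_ms}) to obtain $S=S_t$ with $K_t$ close to $1$, and then use convexity of $\Lambda_{i,n}$ together with the rough isometry $\Lambda_{i,n}\simeq (H_i,\tfrac{1}{2^n}d_{S_i})$ from \cref{levels_rips} to control the distortion of $H_i$ inside $(G,t\,d_S)$. Two simplifications worth noting from the paper's version: the basepoint is placed at the identity in level~$0$, so that $H_i\cdot x_0=\Lambda_{e,i,0}^0$ for \emph{every} $i$ at once (avoiding your single-$i$ basepoint issue), and the comparison $(H_i,\tfrac{1}{2^n}d_{S_i})\to\Lambda_{i,n}$ is already a rough isometry for any $n$, so enlarging $n$ plays no role beyond securing convexity and your concern about adjoining $\bigcup_i S_i$ to $S_t$ is unnecessary.
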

\begin{proof}
  Let $S' \supseteq \bigcup_i S_i$ be a finite generating set for $G$.
  Let $\Gamma' = \Cay(G,S')$ and let $\Lambda_{g,i} = g\Cay(H_i,S_i)$.
  By \cref{thm:convexify}, for some $n$, the top level subgraphs $\Lambda_{g,i}\times \{n\}$ of the restricted horoballs of
  $\mathcal{H}_n = \mathcal{H}_n\bigl(\Gamma',\{\Lambda_{g,i}\}_{g,i}\bigr)$ are convex.  Moreover, by \cref{action_on_n_restricted_horoball}, the group $G$ acts properly and cocompactly on $\mathcal{H}_n$.

  By \cref{cor:rough_isometry_of_horopheres} and \cref{levels_rips}, there is a  rough isometry $(\Lambda_{e,i}^{(0)}\times \{0\}, d_{\mathcal{H}_n}) \to \bigl(H_i,\frac{1}{2^n}d_{S_i}\bigr)$.  By \cref{fine_ms}, there is a
  generating set $S$ for $G$ and a scaling factor $t' > 0$ such that
  the inclusion $(G, t' d_S) \hookrightarrow \mathcal{H}_n$ is a
  quasi-isometry with multiplicative constant $L$, where $d_S$ is the
  word metric coming from $S$.  But the image of $H_i$ under this inclusion is $\Lambda_{e,i}^{(0)}\times \{0\}$ and so the composition of the restriction $(H_i, t' d_S) \hookrightarrow (\Lambda_{e,i}^{(0)}\times \{0\}, d_{\mathcal{H}_n})$ and the rough isometry 
  $(\Lambda_{e,i}^{(0)}\times \{0\}, d_{\mathcal{H}_n}) \to \bigl(H_i,\frac{1}{2^n}d_{S_i}\bigr)$
  gives us a quasi-isometry $(H_i, t' d_S) \to \bigl(H_i,\frac{1}{2^n}d_{S_i}\bigr)$
  with multiplicative constant $L$.  Scaling the domain and the codomain by $2^n$, taking the quasi-inverse and composing it with the isometric embedding $(H_i, 2^n t' d_S) \hookrightarrow (G, 2^n t' d_S)$ we obtain a quasi-isometry $(H_i, d_{S_i}) \hookrightarrow (G, 2^n t' d_S)$ with multiplicative factor $L$.
\end{proof}

Finally, we will need the next two theorems about strongly shortcut spaces and groups.

\begin{thrm}[{\cite[Proposition~3.4]{Hoda:shortcut_space}}]
  \label{rough_approx_inv}
  Let $X$ be a strongly shortcut metric space.  Then
  there exists an $L_X > 1$ such that whenever $Y$ is a
  metric space and $C > 0$ and $f \colon Y \to X$ is an
  $(L_X,C)$-quasi-isometry up to scaling, then $Y$ is also strongly
  shortcut.
\end{thrm}

\begin{thrm}[{\cite[Theorem~C]{Hoda:shortcut_space}}]
  \label{ss_group}
  A group $G$ is strongly shortcut if and only if $G$ has a finite
  generating set $S$ for which $\Cay(G,S)$ is strongly shortcut.
\end{thrm}

\begin{proof}[Proof of \cref{ss_parabolics_in_cayley}]
  Let $G$ be a finitely generated group that is hyperbolic relative to
  strongly shortcut groups $(H_i)_i$.  By \cref{ss_group}, we can
  choose finite generating sets $S_i$ of $H_i$ so that the Cayley
  graphs $\Cay(H_i,S_i)$ are strongly shortcut.  Then, by
  \cref{rough_approx_inv}, for each $i$, there exists an $L_i > 1$
  such that any metric space that, up to scaling, is quasi-isometric
  to $(H_i, d_{S_i})$ with multiplicative constant $L_i$ is also
  strongly shortcut.  By \cref{approximating_parabolics}, there is a
  finite generating set $S$ of $G$ and a $t > 0$ such that, for each
  $i$, if $d_S$ is the word metric coming from $S$ then $(H_i, t d_S)$
  is quasi-isometric to $(H_i, d_{S_i})$ with multiplicative constant
  $L = \min_i L_i$.  Thus each $(H_i, d_S)$ is strongly shortcut.
\end{proof}

\section{Asymptotic cones and the proof of the main result} \label{sec:main_result}

In this section we will recall the definition of asymptotic cones of metric spaces.  Then we will state the theorem of Osin and Sapir on tree-gradedness of asymptotic cones of relatively hyperbolic groups and a theorem of the second named author giving an asymptotic cone characterization of the strong shortcut property.  We will use these theorems and the results of the previous sections to prove \cref{ss_closed_under_relhyp}.

For an exposition of asymptotic cones, see Drutu and Kapovich, 2018
\cite{Drutu:2018}.

\begin{defn}
A \defterm{non-principal ultrafilter} $\omega$ over $\mathbb{N}$ is a set of subsets of $\mathbb{N}$ satisfying the following properties:
\begin{enumerate}
\item For each $A \subseteq \mathbb{N}$, either $A \in \omega$ or $\mathbb{N} \setminus A \in \omega$, but not both.
\item No finite subset of $\mathbb{N}$ is in $\omega$.
\item If $A,B \in \omega$, then $A \cap B \in \omega$.
\item If $A \in \omega$ and $A \subseteq B$, then $B \in \omega$. 
\end{enumerate}
\end{defn}

The existence of non-principal ultrafilters is a consequence of Zorn's Lemma (see \cite[Lemma~10.18]{Drutu:2018} for instance).

\begin{defn}
Let $\omega$ be a non-principal ultrafilter. Let $(x_n)_n$ be a sequence of points in a topological space $X$. An element $x \in X$ is an \defterm{$\omega$-limit} of $(x_n)_n$, denoted $\lim_{\omega} x_n$, if for every open set $U \ni x$, the set $A_U = \{n \in \mathbb{N} | x_n \in U\}$ is contained in $\omega$. 
\end{defn}

\begin{rmk}
If $X$ is a Hausdorff space, then an $\omega$-limit is unique whenever it exists. If $X$ is compact, then for every sequence, an $\omega$-limit exists.
\end{rmk}

Let $(X,d)$ be a metric space and let $\omega$ be a non-principal ultrafilter over $\mathbb{N}$. Let $(r_n)_n$ be a sequence of real numbers such that $\lim_{\omega} r_n = \infty$. Fix a sequence of basepoints $(p_n)_n \in X^{\mathbb{N}}$.

Let $d_{\infty} : X^{\mathbb{N}} \times X^{\mathbb{N}} \to [0,\infty]$ be defined as $d_{\infty} ((x_n)_n, (y_n)_n) := \lim_{\omega} (d(x_n,y_n)/r_n)$. Let $X_B^{\mathbb{N}}((r_n)_n,(p_n)_n) := \{(x_n)_n \in X^{\mathbb{N}} | d_{\infty}((x_n)_n,(p_n)_n) < \infty\}$.

\begin{rmk}
Note that $(X_B^{\mathbb{N}}((r_n)_n,(p_n)_n),d_{\infty})$ is a pseudo-metric space.
\end{rmk}

\begin{defn}
  The \defterm{asymptotic cone} $\Cone_{\omega}(X,(r_n)_n,(p_n)_n)$ of $X$
  is the quotient of $X_B^{\mathbb{N}}((r_n)_n,(p_n)_n)$ identifying
  $(x_n)_n$ and $(y_n)_n$ whenever whenever $d_{\infty}((x_n)_n,(y_n)_n) = 0$.
  We let $[x_n]$ denote the point of $\Cone_{\omega}(X,(r_n)_n,(p_n)_n)$
  represented by $(x_n)_n$.
\end{defn}

\begin{rmk}
  For a group $G$ equipped with a left invariant metric, any
  asymptotic cone $\Cone_{\omega}(G,(r_n)_n,(p_n)_n)$ is isometric to
  $\Cone_{\omega}(G,(r_n)_n,(1)_n)$, where $(1)_n$ is the constant
  basepoint sequence at the identity.  Thus in this case we will
  simply write $\Cone_{\omega}(G,(r_n)_n)$.
\end{rmk}

\begin{defn}[Drutu and Sapir \cite{Drutu:2005}]\label{defn_tree_graded}
A complete geodesic metric space $X$ is a \defterm{tree graded space} with respect to a collection of closed geodesic subspaces, called \defterm{pieces}, if the following two properties are satisfied:
\begin{enumerate}
\item Any two distinct pieces intersect in at most a single point, and
\item Every non-trivial simple geodesic triangle (i.e., the concatenation of the three geodesics is a simple loop) in $X$ is contained in a piece.
\end{enumerate}
\end{defn}

\begin{thrm}[{Osin and Sapir \cite[Theorem~A.1]{Drutu:2005}}]
  \label{thm:treegraded} Let $G$ be a finitely generated group and let $d_S$ be a word
  metric coming from a finite generating set $S$ of $G$.  If $G$ is
  hyperbolic relative to a family of subgroups $(H_i)_i$ then every
  asymptotic cone
  $\mathcal{A} = \Cone_{\omega}\bigl((G,d_S),(r_n)\bigr)$ of
  $(G, d_S)$ is tree graded with respect to the $\omega$-limits
  \[ \lim_{\omega} (g_n H_i)_n = \bigl\{ [x_n]_n \in \mathcal{A} \sth
    x_n \in g_n H_i \bigr\} \] of the $(g_n H_i)_n$ with
  $[g_n]_n \in \mathcal{A}$.
\end{thrm}

\begin{rmk}
  \label{pieces_ascones}
  The $\lim_{\omega} (g_n H_i)_n$ are isometric to asymptotic cones of
    the $(H_i, d_S)$.  Indeed, the asymptotic cone $\mathcal{A}$ is a
    group with multiplication given by
    \[ [x_n]_n \cdot [y_n]_n = [x_ny_n]_n \] and $d_{\infty}$ is a
    left-invariant metric with respect to this group structure.  Thus
    $\lim_{\omega} (g_n H_i)_n$ is isometric to
    \[ [g_n^{-1}]_n\lim_{\omega} (g_n H_i)_n = \lim_{\omega} (H_i)_n \]
    which is $\Cone_{\omega}\bigl((H_i,d_S),(r_n)\bigr)$.
\end{rmk}

A \emph{Riemannian circle} $C$ is $S^1$ equipped with a geodesic metric
of some length $|C|$.  In other words $C$ is the quotient of $\R$ by
the action of $|C|\Z$.

\begin{thrm}[{\cite[Theorem~3.7]{Hoda:shortcut_space}}]
  \label{sshortcut_equiv}
  A metric space $X$ is strongly shortcut if and only
  if no asymptotic cone of $X$ contains an isometric copy of the
  Riemannian circle of unit length.
\end{thrm}

We are now ready to prove our main result, which we first recall:
\main*
\begin{proof}
  Let $G$ be a finitely generated group that is hyperbolic relative to strongly shortcut groups
  $(H_i)_i$.  By \cref{ss_parabolics_in_cayley}, there is a finite generating set $S$ of $G$ such that $(H_i, d_S)$ is strongly shortcut for each $i$, where $d_S$ is the word metric coming from $S$.  We will show that the Cayley graph $\Cay(G,S)$ is strongly shortcut.  By \cref{graph_metric_ss} and \cref{sshortcut_equiv} it will suffice to prove that no asymptotic cone $\mathcal{A}$ of $\Cay(G,S)$ contains a Riemannian circle of unit length.
  
  By \cref{thm:treegraded}, any embedded copy of
  $C$ in $\mathcal{A}$ is contained in some
  $\lim_{\omega} (g_n H_i)_n$ with $[g_n]_n \in \mathcal{A}$.  Thus it
  suffices to show that $\lim_{\omega} (g_n H_i)_n$ does not contain
  an isometric copy of the Riemannian circle of unit length.
  But by \cref{pieces_ascones}, the $\omega$-limit
  $\lim_{\omega} (g_n H_i)_n$ is isometric to an asymptotic cone
  $\mathcal{A}'$ of $(H_i, d_{S'})$, which is strongly shortcut.
  Hence $\mathcal{A}'$ cannot contain an isometric copy of the
  Riemannian circle of unit length, by \cref{sshortcut_equiv}.
\end{proof}

\bibliographystyle{alpha}
\bibliography{nima,relhyp}

\end{document}